\documentclass[11pt, reqno]{amsart}
\usepackage{bm}
\usepackage{verbatim}

\usepackage{amsmath,amssymb,amscd,amsthm,amsxtra, esint, bbm}
\usepackage[dvips]{graphics,epsfig}

\headheight=8pt
\topmargin=0pt
\textheight=624pt
\textwidth=432pt
\oddsidemargin=18pt
\evensidemargin=18pt

\allowdisplaybreaks[2]

\sloppy

\hfuzz  = 0.5cm 


%

\newtheorem{theorem}{Theorem} [section]
\newtheorem{maintheorem}{Theorem}
\newtheorem{lemma}[theorem]{Lemma}
\newtheorem{proposition}[theorem]{Proposition}
\newtheorem{remark}[theorem]{Remark}





\newcommand{\noi}{\noindent}

\newcommand{\R}{\mathbb{R}}
\newcommand{\C}{\mathbb{C}}
\newcommand{\T}{\mathbb{T}}

\newcommand{\al}{\alpha}
\newcommand{\dl}{\delta}

\newcommand{\eps}{\varepsilon}

\newcommand{\g}{\gamma}

\newcommand{\ld}{\lambda}
\newcommand{\Ld}{\Lambda}
\newcommand{\s}{\sigma}
\newcommand{\ft}{\hat}
\newcommand{\wt}{\widetilde}
\newcommand{\cj}{\overline}
\newcommand{\dx}{\partial_x}
\newcommand{\dt}{\partial_t}
\newcommand{\dd}{\partial}

\newcommand{\too}{\longrightarrow}

\newcommand{\jb}[1]
{\langle #1 \rangle}

\newcommand{\proj}{\mathbb{P}}

\newcommand{\ind}{\mathbf 1}

\numberwithin{equation}{section}
\numberwithin{theorem}{section}

\title[On invariant Gibbs measures conditioned on mass and momentum]
{On invariant Gibbs measures conditioned \\on mass and momentum}

\author{Tadahiro Oh, Jeremy Quastel}
\address{Tadahiro Oh\\ Department of Mathematics\\ 
Princeton University\\Fine Hall\\
Washington Rd.\\
Princeton, NJ 08544-1000, USA}
\email{hirooh@math.princeton.edu}

\address{Jeremy Quastel\\ Departments of Mathematics and Statistics\\
University of Toronto\\
40 St. George St, Toronto, ON M5S 2E4, Canada}
\email{quastel@math.toronto.edu}
\thanks{J.~Quastel was partially supported by the Natural Sciences and Engineering Research Council of Canada.}

\begin{document}

\begin{abstract} We construct a Gibbs measure
for the  nonlinear Schr\"odinger equation (NLS) on the circle,
conditioned on prescribed mass and momentum:
\[d \mu_{a,b} = Z^{-1} \ind_{ \{\int_\T |u|^2 = a\} }\ind_{\{i \int_\T u \cj{u}_x  = b\}}
e^{ \pm  \frac{1}{p}\int_\T |u|^p-\frac{1}{2}\int_{\T} |u|^2  } d P\]

\noi
for $a \in \R^+$ and $b \in \R$,
where $P$ is the complex-valued Wiener measure on the circle.
We also show that $\mu_{a,b}$ is invariant under the flow of NLS.  We note that $i \int_\T u \cj{u}_x$
is  the L\'evy stochastic area, and
 in particular that this is invariant under the flow of NLS.
\end{abstract}

\subjclass[2000]{60H40, 60H30, 35Q53, 35Q55}

\keywords{Gibbs measure; Schr\"odinger equation; Kortweg-de Vries equation; 
L\'evy area}

\maketitle

\tableofcontents

\section{Introduction}
\label{SEC:1}

We consider 
the periodic  nonlinear Schr\"odinger equation (NLS) on the circle:
\begin{equation} \label{NLS1}
i u_t + u_{xx}  = \pm |u|^{p-2} u,  \qquad
(x, t) \in \T\times \R
\end{equation}

\noi
where $\T = \R /  \mathbb{Z}$.
Recall that \eqref{NLS1} is a Hamiltonian PDE with Hamiltonian:
\begin{equation} \label{Hamil}
H(u) = \frac{1}{2}\int_\T |u_x|^2 \pm \frac{1}{p} \int_\T |u|^{p}.
\end{equation}

\noi
Indeed, \eqref{NLS1} can be written as
\begin{equation} \label{NLS2}
u_t = i \frac{\dd H} {\dd \bar{u}}.
\end{equation}

\noi
Recall that \eqref{NLS1} also conserves the mass $M(u) = \int |u|^2$ and 
the momentum $P(u) = i \int u \cj{u}_x$.
Moreover, the cubic NLS ($p = 4$) is known to be completely integrable \cite{ZS, GKP}
in the sense
that it enjoys the Lax pair structure 
and thus there exist infinitely many conservation laws for \eqref{NLS1}.
For general $p\ne4$, the mass $M$, the momentum $P$, and  the Hamiltonian $H$
are the only known conservation laws.
Our main goal in this paper is 
to construct an invariant Gibbs measure conditioned on mass and momentum.

\medskip
First, consider a Hamiltonian flow on $\mathbb{R}^{2n}$:
\begin{equation} \label{HR2}
\dot{p}_i = \tfrac{\partial H}{\partial q_j}, \quad
\dot{q}_i = - \tfrac{\partial H}{\partial p_j} 
\end{equation}

\noi
 with Hamiltonian $ H (p, q)= H(p_1, \dots, p_n, q_1, \dots, q_n)$.
 Then, Liouville's theorem states that the Lebesgue measure 
 $\prod_{j = 1}^{n} dp_j dq_j$ on $\mathbb{R}^{2n}$ is invariant under the flow.
Then, it follows from the conservation of the Hamiltonian $H$
that  the Gibbs measure $e^{-  H(p, q)} \prod_{j = 1}^{n} dp_j dq_j$ 
is invariant under the flow of \eqref{HR2}.
Now note that if $F(p, q)$ is any (reasonable) function that is conserved under the flow of \eqref{HR2}, 
then the measure $d \mu_F = F(p, q)e^{- H(p, q)} \prod_{j = 1}^{n} dp_j dq_j$ is also invariant.

By viewing \eqref{NLS1}
as an infinite dimensional Hamiltonian system, 
one can consider the issue of invariant Gibbs measures for \eqref{NLS1}.
Lebowitz-Rose-Speer \cite{LRS} constructed Gibbs measures of the form 
\begin{equation} \label{Gibbs}
d \mu = Z^{-1} e^{- H(u)} \prod_{x\in \T} d u(x)
= Z^{-1} e^{\mp \frac{1}{p} \int_\T |u|^{p}}
\underbrace{e^{-\frac{1}{2}\int_\T |u_x|^2} \prod_{x\in \T} d u(x)}_{= \text{ Wiener measure } P}
\end{equation}

\noi
as a weighted Wiener measure on $\T$.
In the focusing case, i.e. with  the plus sign in \eqref{Gibbs}, 
the result  holds only for $p \leq 6$ 
with an $L^2$-cutoff $\ind_{\{\int |u|^2 \leq B \}}$,
where $B$ is any positive number when $p < 6$
and $B < \|Q\|^2_{L^2(\R)}$ when $p = 6$.
Here,  $Q$ is the ground state of the following elliptic equation:
\begin{equation}\label{ground}
 (p-2) Q'' - (p+2) Q + Q^{p-1} = 0.
\end{equation} 
 
\noi 
By analogy with the finite dimensional case, 
we expect such a Gibbs measure $\mu$ is invariant under the flow of \eqref{NLS1}.
(Recall that the $L^2$-norm is conserved.)
In addressing the question of invariance of $\mu$, 
we need to have a well-defined flow on the support of $\mu$.
However, as a weighted Winer measure,
the regularity of $\mu$ is inherited
from that of the Wiener measure.
i.e. $\mu$ is supported on $H^s(\T) \setminus H^{\frac{1}{2}}(\T)$, $s <\frac{1}{2}$.
In \cite{B1}, Bourgain proved local well-posedness of \eqref{NLS1}
\begin{itemize}
\item in $L^2(\T)$ for (sub-)\,cubic NLS ($p \leq 4$),
\item in $H^s(\T)$, $s > 0$, for  (sub-)\,quintic NLS ($4< p \leq 6$),
\item in $H^s(\T)$, $ s >  \frac{1}{2}-\frac{1}{p}$, for $p >6$.
\end{itemize}

\noi
Using the Fourier analytic approach, he \cite{B2} 
continued the study of Gibbs measures and 
proved the invariance of $\mu$ under the flow of NLS.

\medskip

Once the invariance of the Gibbs measure $\mu$ is established, 
we can regard the flow map of \eqref{NLS1}
as a measure-preserving transformation on an (infinite-dimensional) phase space, say $H^{\frac{1}{2}-\epsilon}$,
equipped with the Gibbs measure $\mu$. 
Then, it follows from 
Poincar\'e recurrence theorem 
that almost all the points of the phase space are stable according to Poisson \cite{Z},  
i.e. if $\mathcal{S}_t$ denotes a flow map of \eqref{NLS1}: $u_0 \mapsto u(t) = \mathcal{S}_{t} u_0$, 
then for almost all $u_0$, there exists a sequence $\{t_n\}$ tending to $\infty$
such that $ \mathcal{S}_{t_n} u_0 \to u_0$.
Moreover, such dynamics is also multiply recurrent
in view of Furstenberg's multiple recurrence theorem \cite{F}: 
let $A$ be any measurable set with $\mu(A) > 0$.
Then, for any integer $k >1$, there exists $n \ne0$
such that 
$\mu( A \cap \mathcal{S}_n A
\cap \mathcal{S}_{2n} A \cap\cdots \cap\mathcal{S}_{(k-1)n} A )>0$.
Note that this recurrence property is known to hold only in the support of the Gibbs measure,
i.e. not for smooth functions.

Then, one of the natural questions, posed by Lebowitz-Rose-Speer \cite{LRS} and Bourgain \cite{B4},
is the ergodicity of the invariant Gibbs measure $\mu$. 
i.e. is the phase space irreducible under the dynamics,
or can it be decomposed into disjoint subsets, 
where the dynamics is recurrent within each disjoint component?
In order to ask such a question, 
one needs to prescribe the $L^2$-norm
since it is an integral of motion for \eqref{NLS1}.
It is not difficult to see that 
the momentum is also finite almost surely on the support of the Gibbs measure.
Indeed, if $u$ is distributed according to the Wiener measure,
then it can be represented as\footnote{We ignore the zero-frequency issue here.
See \eqref{G2} below.} 
\begin{equation} \label{G1}
u(x) = \sum_{n\ne 0 } \frac{g_n}{2\pi n} e^{2\pi inx},
\end{equation}

\noi
where $\{g_n\}_{n\ne 0}$ is a family of independent standard complex-valued
Gaussian random variables, i.e.
its real and imaginary parts are independent Gaussian random variables
with mean zero and variance 1.
Then, we can write the momentum as
\begin{align*}
P(u) = i \int u\cj{u}_x
= \sum_{n\ne0} \frac{|g_n(\omega)|^2}{2\pi n} 
= \sum_{n\geq 1} \frac{|g_n(\omega)|^2 - |g_{-n}(\omega)|^2}{2\pi n}.
\end{align*}

\noi
Thus, we have $\mathbb{E}\big[\big(P(u)\big)^2\big] \lesssim \sum_{n\geq 1} n^{-2} <\infty.$\footnote{We
use $A \lesssim B$ to denote an estimate of the form $A \leq CB$ for some $C>0$.
Similarly, we use $A\sim B$ to denote $A \lesssim B$ and $B \lesssim A$.}
Hence, $|P(u)|<\infty$ a.s.
In the following, we construct
 invariant Gibbs measures with prescribed $L^2$-norm and momentum
  as the first step in 
  studying finer dynamical properties of the NLS flow equipped with the invariant Gibbs measure,
viewed as an infinite-dimensional dynamical system with a measure-preserving transformation.

\begin{remark}\label{REM:cubic}\rm
Recall that the cubic NLS ($p = 4$) is completely integrable. 
Hence, it makes sense to pose a question of ergodicity only for $p \neq 4$.
See \cite{LRS}.

There are infinitely many conservation laws for the cubic NLS,
with the leading term of the form $\int_\T |\dx^k u|^2 dx$, 
roughly corresponding to the $H^k$-norm, 
and of the form $\int_\T u \, \dx^{2k +1}\cj{u} \, dx$, $k \in \mathbb{N}\cup\{0\}$. 
See \cite{FT, ZM}.
By \eqref{G1}, we can easily see that 
all these conservation laws, except for the $L^2$-norm and momentum, 
are almost surely divergent under the Gibbs measure. 
Thus, it may seem that the $L^2$-norm and momentum are the only conserved quantities
which are finite a.s.~in the support of the Gibbs measure.
However, from a different perspective, 
we have a different set of infinitely many 
conserved quantities for \eqref{NLS1},
namely the spectrum of the Zakharov-Shabat operator $L$ (also called  the Dirac operator)
appearing in the Lax pair formulation of \eqref{NLS1}:
$\dt L = [B, L]$ (with some appropriate $B$.)   These are finite under the Gibbs measure.
Expressing the flow of \eqref{NLS1} 
in the Liouville coordinates (or rather in the Birkhoff coordinates)
with actions and angles (which are determined in terms of the spectral data),
the flow basically becomes trivial.
See \cite{GKP}.
\end{remark}

In constructing a Gibbs measure conditioned on mass and momentum, 
we first condition the Wiener measure on mass and momentum.
Recall that if $u$ is distributed 
according to the Wiener measure $P$ given by\footnote{The mass is added
to take care of  the zeroth frequency. We still refer to $P$ in \eqref{Gauss1}
and
$u$ in \eqref{G2}
as the Wiener measure
and  the Brownian motion, respectively.} 
\begin{equation} \label{Gauss1}
 dP = Z^{-1} e^{-\frac{1}{2}\int_\T |u|^2  -\frac{1}{2}\int_\T |u_x|^2} \prod_{x\in \T}du(x),
\end{equation}

\noi
then it can be represented as
\begin{equation} \label{G2}
u(x) = \sum_{n\in \mathbb{Z} } \frac{g_n}{\sqrt{1+4\pi^2n^2}} e^{2\pi inx},
\end{equation}

\noi
where $\{g_n\}_{n\in \mathbb{Z}}$ is a family of independent standard complex-valued
Gaussian random variables.
Note that \eqref{G2} is basically the Fourier-Wiener series
for the Brownian motion (except for the zeroth mode.)
Given $a> 0$ and $b \in \R$,  
define the conditional Wiener measures $P_\eps  = P_{\eps, a, b} $, $\eps > 0$,  as follows.
Given a measurable set $E$,  we define $P_\eps(E)$ by
\begin{equation} \label{Gauss3}
P_\eps(E) = P\bigg(  E\, \Big| 
\int_\T |u|^2 \in A_\eps(a), 
\,   i \int_\T u \cj{u}_x  \in B_\eps(b)\bigg),
\end{equation}

\noi
where $A_\eps(a)$ and $B_\eps(b)$
are neighborhoods shrinking nicely\footnote{See Subsection \ref{SUBSEC:2.1}
for the definition.} to $a$ and $b$ as $\eps \to 0$.  Here $P(C\mid D) = P(C\cap D)/P(D)$ is the
standard, naive, conditional probability given by Bayes' rule.
In terms of the density, we have 
\begin{equation} \label{Gauss2}
 dP_\eps  = \ft{Z}_\eps^{-1} \ind_{\{\int_\T |u|^2 \in A_\eps(a)\}}
 \ind_{\{ i \int_\T u \cj{u}_x \in B_\eps(b) \}} dP.
\end{equation}

\noi
Now, we would like to define the conditioned measure 
\[ P_0 (E) = P_{0, a, b}(E) = 
P\bigg(  E\, \Big| 
\int_\T |u|^2 = a, 
\,   i \int_\T u \cj{u}_x  = b\bigg)\]

\noi
by $P_0 = \lim_{\eps\to 0} P_\eps$.
Namely, we define $P_0$ by 
\begin{equation} \label{Gauss4}
 P_0(E) := \lim_{\eps\to0}P\bigg(  E\, \Big| 
\int_\T |u|^2 \in A_\eps(a), 
\,   i \int_\T u \cj{u}_x  \in B_\eps(b)\bigg).
\end{equation}

\noi
Note that the normalization constant $\ft{Z}_\eps$ in \eqref{Gauss2} tends to 0
as $\eps \to 0$.
Hence, some care is needed. 
We discuss details in Subsection 2.1.

\medskip

Finally, we define
the conditioned Gibbs measure $\mu_0 = \mu_{a, b}$
in terms of the Wiener measure $P_0 = P_{0, a, b} $ conditioned on mass and momentum, 
by setting
\begin{equation} \label{Gibbs1}
d\mu_0 = Z_0^{-1} e^{\mp \frac{1}{p} \int_\T |u|^p }dP_0.
\end{equation}

\noi
In the defocusing case, this clearly
defines a probability measure 
since $e^{-\frac{1}{p} \int_\T |u|^p } \leq 1$.
In the focusing case, 
we need to show that 
\begin{equation} \label{weight1}
e^{\frac{1}{p} \int_\T |u|^p } \in L^1(d P_0).
\end{equation}

\noi
Lebowitz-Rose-Speer \cite{LRS}
and Bourgain \cite{B2} proved a similar integrability result 
of the weight $e^{\frac{1}{p} \int_\T |u|^p }$
with respect to the (unconditioned) Wiener measure $P$ defined in \eqref{Gauss1}.
Bourgain's argument was
based on dyadic pigeonhole principle
and a large deviation estimate
(see Lemma 4.2 in \cite{OQV}.)
In Subsection \ref{SUBSEC:2.2}, we follow Bourgain's argument
and prove \eqref{weight1} by dyadic pigeonhole principle
and a large deviation estimate for $P_0$.
This large deviation estimate for $P_0$ 
is by no means automatic,
and we need to deduce it by establishing  a {\it uniform} large deviation estimate
for the conditioned Wiener measures $P_\eps$, $\eps > 0$
(see Lemma \ref{LEM:devi} below.)
As a result, we obtain the $L^1$-boundedness result
\[\mathbb{E}_{P_\eps} \Big[e^{\frac{1}{p} \int_\T |u|^p }\Big] \leq C_p <\infty\]

\noi
for all sufficiently small $\eps \geq 0$.
We point out that the proof of 
Lemma \ref{LEM:devi} (and hence the argument
in Subsection \ref{SUBSEC:2.1}) is the heart of this paper.

We state the main theorem.
The proof is presented in  in the next section.

\begin{maintheorem} \label{thm1}
Let $a>0$ and $b \in \R$.
For $p> 2$, let $\mu_0$ be the Gibbs measure $\mu_0 = \mu_{ a, b}$ conditioned on mass and momentum defined in \eqref{Gibbs1}.
Also, assume that $p\leq 6$ in the focusing case.
Then, $\mu_0$ is a well-defined probability measure
(with sufficiently small mass $a$ when $p = 6$ in the focusing case),
absolutely continuous to the conditioned Wiener measure $P_0$.
Moreover, $\mu_\eps$ converges weakly to $\mu_0$ as $\eps \to 0$,
where
$\mu_\eps$ is defined by  
\begin{equation} \label{Gibbs2}
d\mu_\eps := Z_\eps^{-1} e^{\mp \frac{1}{p} \int_\T |u|^p }dP_\eps.
\end{equation}

\end{maintheorem}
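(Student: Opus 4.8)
The plan is to proceed in two main stages: first establish that $\mu_0$ is a well-defined probability measure absolutely continuous with respect to $P_0$, and then establish the weak convergence $\mu_\eps \to \mu_0$. For the first stage, the defocusing case is immediate since $e^{-\frac1p\int_\T |u|^p} \le 1$, so $\mu_0 = Z_0^{-1} e^{-\frac1p\int_\T|u|^p}\,dP_0$ is a finite measure and $Z_0 = \mathbb{E}_{P_0}[e^{-\frac1p\int_\T|u|^p}] \in (0,1]$. The substantive point is the focusing case, where I need the integrability \eqref{weight1}, i.e. $e^{\frac1p\int_\T|u|^p} \in L^1(dP_0)$. Following the strategy sketched in the introduction, I would first prove the \emph{uniform} bound $\mathbb{E}_{P_\eps}[e^{\frac1p\int_\T|u|^p}] \le C_p < \infty$ for all sufficiently small $\eps \ge 0$ (including $\eps = 0$), via Bourgain's dyadic pigeonhole argument combined with the uniform large deviation estimate for $\{P_\eps\}$, which is Lemma \ref{LEM:devi} and is assumed available. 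For $p < 6$ this works for any mass $a > 0$; for $p = 6$ the pigeonhole/ground-state threshold forces $a < \|Q\|_{L^2(\R)}^2$ (after the appropriate rescaling of \eqref{ground}), which is exactly the smallness hypothesis. Taking $\eps = 0$ in the uniform bound gives \eqref{weight1}, so $Z_0 = \mathbb{E}_{P_0}[e^{\frac1p\int_\T|u|^p}] \in (0,\infty)$ and $d\mu_0 = Z_0^{-1} e^{\frac1p\int_\T|u|^p}\,dP_0$ is a well-defined probability measure, manifestly absolutely continuous with respect to $P_0$.

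For the weak convergence $\mu_\eps \rightharpoonup \mu_0$, I would test against an arbitrary bounded continuous functional $F$ on the relevant phase space $H^{\frac12-\epsilon}(\T)$ and show
\[
\mathbb{E}_{\mu_\eps}[F] = \frac{\mathbb{E}_{P_\eps}\big[F\, e^{\pm\frac1p\int_\T|u|^p}\big]}{\mathbb{E}_{P_\eps}\big[e^{\pm\frac1p\int_\T|u|^p}\big]} \too \frac{\mathbb{E}_{P_0}\big[F\, e^{\pm\frac1p\int_\T|u|^p}\big]}{\mathbb{E}_{P_0}\big[e^{\pm\frac1p\int_\T|u|^p}\big]} = \mathbb{E}_{\mu_0}[F].
\]
The denominator converges by the same argument as the numerator (taking $F \equiv 1$), and it is bounded below away from $0$ uniformly in $\eps$ since $Z_0 > 0$ and $Z_\eps \to Z_0$; so it suffices to handle the numerator. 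Here the key input is that $P_\eps \rightharpoonup P_0$ weakly as $\eps \to 0$, which is built into the construction of $P_0$ in Subsection \ref{SUBSEC:2.1}. The map $u \mapsto \int_\T |u|^p$ is continuous on $H^{\frac12-\epsilon}(\T)$ (by Sobolev embedding $H^{\frac12-\epsilon} \embeds L^p$ for $\epsilon$ small, in one spatial dimension all $p < \infty$ are fine), hence $u \mapsto F(u) e^{\pm\frac1p\int_\T|u|^p}$ is continuous; in the defocusing case it is also bounded, so weak convergence of $P_\eps$ applies directly. In the focusing case the weight is unbounded, so I would upgrade via uniform integrability: the uniform bound $\sup_{\eps} \mathbb{E}_{P_\eps}[e^{(1+\dl)\frac1p\int_\T|u|^p}] < \infty$ for some small $\dl > 0$ — which comes from the same Lemma \ref{LEM:devi} with a slightly larger exponent, still below the threshold — gives that $\{e^{\pm\frac1p\int_\T|u|^p}\}$ is uniformly integrable with respect to $\{P_\eps\}$, and then a standard truncation argument (split at a large level $M$, use weak convergence on the truncated bounded functional and uniform integrability to control the tail) yields convergence of the numerator.

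The main obstacle is not in the weak-convergence bookkeeping above, which is routine once the pieces are in place, but rather in the two pillars those pieces rest on: the construction of $P_0$ as the genuine weak limit of the conditioned measures $P_\eps$ (handling the degeneration $\ft Z_\eps \to 0$ and the ``shrinking nicely'' normalization), and the uniform large deviation estimate of Lemma \ref{LEM:devi}. The latter is the crux: the usual large deviation / Gaussian-tail estimates for the Wiener measure $P$ do not transfer automatically to $P_\eps$, because conditioning on $\{\int|u|^2 \approx a\}$ and $\{i\int u\bar u_x \approx b\}$ distorts the Gaussian structure, and one needs the deviation constants to be \emph{uniform} in $\eps$ down to $\eps = 0$ in order to pass to the limit and obtain the estimate for $P_0$. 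Establishing that uniformity — presumably by expressing $P_\eps$ through the Fourier–Wiener series \eqref{G2}, isolating the finitely many low modes carrying the constraints, and controlling the conditional density of the high modes uniformly — is, as the authors note, the heart of the matter; everything in the proof of Theorem \ref{thm1} is then assembled from it together with Bourgain's pigeonhole scheme.
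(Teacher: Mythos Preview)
Your proposal is correct and follows essentially the same route as the paper: construct $P_0$ as the weak limit of $P_\eps$, obtain the uniform $L^1$ bound on the focusing weight via Lemma~\ref{LEM:devi} combined with Bourgain's dyadic pigeonhole, and then deduce weak convergence $\mu_\eps \rightharpoonup \mu_0$ by continuity of the weight plus uniform integrability (truncation). One small overclaim: you write that for $p=6$ the argument ``forces $a < \|Q\|_{L^2(\R)}^2$,'' but the pigeonhole/large-deviation scheme only yields \emph{some} smallness of $a$ without identifying the sharp threshold---the paper explicitly notes (Remark~1.2) that this quantitative bound is not recovered by the Bourgain-type argument.
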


\begin{remark} \rm
In the critical case, i.e.  focusing with $p = 6$, 
Lebowitz-Rose-Speer \cite{LRS} proved that 
the weight $\ind_{\{\int_\T |u|^2 \leq B\}}e^{\frac{1}{p}\int_\T|u|^p}$ 
is integrable with respect to the (unconditioned) Wiener measure $P$ in \eqref{Gauss1}
as long as $B < \|Q\|^2_{L^2(\R)}$, where $Q$ is the ground state
for \eqref{ground}.
Indeed, this is sharp (except for the endpoint $B = \|Q\|^2_{L^2(\R)}$.)
By Fourier analytic techniques, Bourgain \cite{B2} provided another proof of this $L^1$-boundedness result.
However, his argument does not allow us to determine the (sharp) upperbound on the size $B$ of the $L^2$-cutoff in the critical case.
We believe that, in the critical case,  the upperbound on $a = \int_\T|u|^2dx$ 
in Theorem \ref{thm1}
is also given by $\|Q\|^2_{L^2(\R)}$.
Unfortunately, our proof of Theorem \ref{thm1}, following Bourgain's idea,
does not provides such a quantitative bound.
\end{remark}

It follows from invariance of the Gibbs measure $\mu$ in \eqref{Gibbs}
(with an $L^2$-cutoff in the focusing case)
and the conservation of mass and momentum
that $\mu_\eps$ is invariant under the flow of \eqref{NLS1}
for each {\it fixed} $\eps > 0$. 
As a corollary to Theorem \ref{thm1}, 
we obtain invariance of the conditioned Gibbs measure $\mu_0$.

\begin{maintheorem} \label{thm2}
Let $a>0$, $b \in\R$,
and $p >2$ be as in Theorem \ref{thm1}.
Then, the conditioned Gibbs measure $\mu_0 = \mu_{a, b}$  defined in \eqref{Gibbs1} 
is invariant under the flow of NLS \eqref{NLS1}.
\end{maintheorem}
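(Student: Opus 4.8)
The plan is to deduce Theorem \ref{thm2} from Theorem \ref{thm1} by the standard approximation argument for invariance of weak limits of invariant measures. Fix $t \in \R$ and let $\mathcal{S}_t$ denote the (globally defined) flow map of \eqref{NLS1} on the support of $\mu_0$; recall that by Bourgain's well-posedness and global-in-time theory \cite{B1, B2} (combined with the $L^2$-cutoff in the focusing case), $\mathcal{S}_t$ is well-defined and continuous on $H^{s}(\T)$ for appropriate $s < \frac12$, hence $\mu_0$-almost everywhere. The first step is to record that for each \emph{fixed} $\eps > 0$, the measure $\mu_\eps$ is invariant under $\mathcal{S}_t$. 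This is because $\mu$ in \eqref{Gibbs} (with the $L^2$-cutoff $\ind_{\{\int|u|^2 \le B\}}$ in the focusing case, for $B$ larger than the range of $A_\eps(a)$) is invariant by \cite{B2}, and both the mass $M(u) = \int_\T |u|^2$ and the momentum $P(u) = i\int_\T u\bar u_x$ are conserved along the flow; hence the indicator functions $\ind_{\{\int_\T |u|^2 \in A_\eps(a)\}}$ and $\ind_{\{i\int_\T u\bar u_x \in B_\eps(b)\}}$ are $\mathcal{S}_t$-invariant, so multiplying the invariant measure $\mu$ by these (and renormalizing) preserves invariance. This gives $(\mathcal{S}_t)_*\mu_\eps = \mu_\eps$ for all $\eps > 0$.

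The second step is to pass to the limit $\eps \to 0$. By Theorem \ref{thm1}, $\mu_\eps \rightharpoonup \mu_0$ weakly as $\eps \to 0$. We want to conclude $(\mathcal{S}_t)_*\mu_0 = \mu_0$. For this it suffices to show that for every bounded continuous $F \colon H^s(\T) \to \R$,
\[
\int F(\mathcal{S}_t u)\, d\mu_0(u) = \int F(u)\, d\mu_0(u).
\]
The right-hand side equals $\lim_{\eps\to 0}\int F\, d\mu_\eps$ by weak convergence. For the left-hand side, since $\mu_\eps$ is $\mathcal{S}_t$-invariant, $\int F(\mathcal{S}_t u)\,d\mu_\eps(u) = \int F(u)\,d\mu_\eps(u)$, so it remains to show $\int F(\mathcal{S}_t u)\,d\mu_\eps(u) \to \int F(\mathcal{S}_t u)\,d\mu_0(u)$. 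This would be immediate if $u \mapsto F(\mathcal{S}_t u)$ were bounded and continuous on all of $H^s(\T)$; the subtlety is that $\mathcal{S}_t$ is only defined (and continuous) on a full-measure subset, and one must ensure that the exceptional set is handled uniformly in $\eps$. The cleanest route is to observe that all $\mu_\eps$ and $\mu_0$ are absolutely continuous with respect to the single reference measure $P$ (or the Gibbs measure $\mu$ with a fixed large $L^2$-cutoff), with densities that are uniformly bounded in $L^1$ (indeed in $L^q$ for the defocusing case trivially, and via the uniform large deviation bound from Lemma \ref{LEM:devi} in the focusing case); since $\mathcal{S}_t$ is defined $P$-a.s., the composition $F\circ \mathcal{S}_t$ is defined $\mu_\eps$- and $\mu_0$-a.s.\ for all $\eps \ge 0$ simultaneously, and it is continuous on a set of full measure for all of them.

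To make the limiting argument rigorous despite the lack of global continuity of $\mathcal{S}_t$, I would use Bourgain's standard truncation scheme: approximate $\mathcal{S}_t$ by the flow $\mathcal{S}_t^{(N)}$ of the finite-dimensional Galerkin (Fourier) truncation of \eqref{NLS1}, which is globally well-posed and smooth on $E_N = \operatorname{span}\{e^{2\pi i n x} : |n| \le N\}$, and for which the truncated conditioned Gibbs measures are exactly invariant. One then shows (i) $\mathcal{S}_t^{(N)} u \to \mathcal{S}_t u$ in $H^s$ as $N \to \infty$, uniformly on sets of large $\mu$-measure, a quantitative stability estimate already contained in \cite{B2}, and (ii) the weak convergence $\mu_\eps \rightharpoonup \mu_0$ is compatible with the truncation. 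Then for bounded continuous $F$,
\[
\int F(\mathcal{S}_t u)\,d\mu_0 = \lim_{N}\int F(\mathcal{S}_t^{(N)} u)\,d\mu_0 = \lim_{N}\lim_{\eps}\int F(\mathcal{S}_t^{(N)} u)\,d\mu_\eps = \lim_{N}\lim_{\eps}\int F(u)\,d\mu_\eps = \int F\,d\mu_0,
\]
where the third equality uses finite-$\eps$ invariance of $\mu_\eps$ under $\mathcal{S}_t^{(N)}$ (which holds because $M$ and $P$ are still conserved by the truncated flow up to harmless corrections — more precisely one conditions the already-truncated Gibbs measure, where conservation is exact), and interchanging the two limits is justified by the uniform (in $\eps$) bound on the densities coming from Lemma \ref{LEM:devi}. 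The main obstacle is precisely this interchange of limits / uniformity in $\eps$: one must control $\mathbb{E}_{P_\eps}[\,|F(\mathcal{S}_t^{(N)}u) - F(\mathcal{S}_t u)|\,]$ uniformly in $\eps$, which reduces, via the uniform $L^1$-integrability of the Gibbs weight (the consequence of Lemma \ref{LEM:devi} highlighted in the introduction) and the uniform absolute continuity $dP_\eps \ll dP$ with controlled behavior, to the $P$-a.s.\ approximation $\mathcal{S}_t^{(N)} \to \mathcal{S}_t$ together with an equi-integrability estimate. Once this uniformity is in hand, the display above closes and Theorem \ref{thm2} follows.
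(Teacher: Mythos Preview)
Your overall strategy---invariance of $\mu_\eps$ for each $\eps>0$, then pass to the weak limit---is exactly the paper's. But you misidentify where the difficulty lies, and the workarounds you propose contain genuine errors.

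For $p\le 6$ there is \emph{no} subtlety: by \cite{B1,B5} the flow $\mathcal{S}_t$ is globally well-posed and continuous on all of $H^{\frac12-\delta}(\T)$, not merely on a full-measure set. Hence $F\circ\mathcal{S}_t$ is bounded and continuous, and the two-line argument
\[
\int F\,d\mu_0=\lim_{\eps\to0}\int F\,d\mu_\eps=\lim_{\eps\to0}\int F\circ\mathcal{S}_t\,d\mu_\eps=\int F\circ\mathcal{S}_t\,d\mu_0
\]
closes immediately. Your Galerkin detour is unnecessary here.

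The real issue is $p>6$ (defocusing), where global well-posedness on $H^{\frac12-}$ is only almost sure. Your two proposed fixes both fail. First, the claim that $\mu_0$ is absolutely continuous with respect to $P$ is false: $P_0$ is supported on the codimension-two set $\{M(u)=a,\ P(u)=b\}$, which has $P$-measure zero, so $P_0\perp P$. The densities $dP_\eps/dP=\hat Z_\eps^{-1}\ind_{\{\cdots\}}$ blow up as $\eps\to0$, and no uniform-integrability argument against $P$ can survive this. Second, $\mu_\eps$ is \emph{not} invariant under the truncated flow $\mathcal{S}_t^{(N)}$: that flow conserves only the truncated mass $\sum_{|n|\le N}|\hat u_n|^2$ and truncated momentum, not the full quantities on which $\mu_\eps$ is conditioned. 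You recognize this (``more precisely one conditions the already-truncated Gibbs measure''), but that produces a different family of measures, and you would then need a separate argument to show \emph{their} limit coincides with $\mu_0$.

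The paper avoids all of this for $p>6$ by working with compact sets $K\subset H^s$: on such $K$ the data are uniformly bounded, so deterministic local well-posedness gives a time $t_0$ and continuity of $\mathcal{S}_{t_0}$ on a neighborhood of $K$; one then uses the portmanteau inequalities for weak convergence (lim\,inf on open sets, lim\,sup on closed sets) together with invariance of $\mu_\eps$ to get $\mu_0(K)\le\mu_0(\mathcal{S}_{t_0}K)$, iterates to arbitrary $t$, and concludes equality by time-reversibility.
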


We conclude this introduction with several remarks.
The first is  about conditional probabilities. 

\begin{remark}\rm A natural way
 to proceed with this construction 
 is to start with the (unconditioned) Gibbs measure $\mu$ in \eqref{Gibbs} 
 on the space $\Omega$,  which is the space of continuous complex-valued  functions on the circle,
with the topology of uniform convergence and the Borel $\sigma$-field $\mathcal{F}$.  This is a 
complete separable metric space.  
Let $\mathcal{G}$ be the sub $\sigma$-field generated by the measurable maps $\int_\T |u|^2$ and
$ i \int_\T u \cj{u}_x$.  
There is a general theorem 
which guarantees the existence of a  conditional probability, i.e. a family of measures $\mu_{u}$, $u\in \Omega$ such that 
(i)  for any $A\in \mathcal{F}$,  $\mu_{u}(A)$ is measurable with respect to $\mathcal{G}$ as a function of $u$; (ii)  for any $A\in \mathcal{G}$ and $B\in \mathcal{F}$, $\mu(A\cap B)= E_\mu[ {\bf 1}_A \mu_u(B)]$.   
It follows from (i) and (ii) that given $B\in \mathcal{F}$, we have
\begin{equation} \label{COND1}
\mu_{u}(B)=
\mu_{\int_\T |u|^2,\, i \int_\T u \cj{u}_x}(B) 
\end{equation}

\noi
for $\mu$-almost every $u$. 
The sets of measure zero, on  which \eqref{COND1} fails, 
depend on $B \in \mathcal{F}$,
and thus their union could be a set of nontrivial measure.  
Hence, one needs some regularity.  The best that can be said in such a general context is that if  $\mathcal{G}$ is countably generated (and one can check that ours is),  then $\mu_{u}$ is a {\it regular} conditional probability in the sense that (iii) $\mu_u(A)={\bf 1}_A(u)$ for $A\in \mathcal{G}$.  In our context, 
this reassures us that our conditioned Gibbs measure  $\mu_0 = \mu_{a,b}$ gives mass one to $u$ with $\int_\T |u|^2=a$ and $i \int_\T u \cj{u}_x=b$.  
However, we only know that this property holds for {\it almost every} $a$ and $b$, 
and there is no soft way out to obtain the same for {\it all} $a$ and $b$.  (Another way to think
of this is that applying the Lebesgue differentiation theorem to (ii) gives Theorem \ref{thm1} for almost every $a$ and $b$.)  
Since we want our conditioned measures to be defined for every value
of $a$ and $b$, we have to define them directly.  
For the conditioned Wiener measure $P_0$, which is just a Gaussian measure, this is straightforward.  
In this case, we can even use the fact that the distributions of $a$ and $b$ are basically explicit.  
However, for the Gibbs measure $\mu_{a, b}$,  it requires hard analysis.

\end{remark}

\begin{remark}\rm
Consider the (generalized) Korteweg-de~Vries equation (gKdV):
\begin{equation} \label{KDV}
u_t + u_{xxx} = \pm u^{p-2}  u_x .
\end{equation}

\noi
For an integer $p \geq 3$, \eqref{NLS1} is a Hamiltonian PDE with Hamiltonian:
\begin{equation} \label{Hamil2}
H(u) = \frac{1}{2}\int_\T u_x^2 \pm \frac{1}{p} \int_\T u^{p},
\end{equation}

\noi
and \eqref{KDV} can be written as
$u_t = \dx \frac{d H} {d u}$.
Also recall that \eqref{KDV} preserves the mean $\int_\T u$
and the $L^2$-norm.
Bourgain \cite{B2} constructed Gibbs measures of the form \eqref{Gibbs}
(with an appropriate $L^2$-cutoff
$\ind_{\{\int |u|^2 \leq B \}}$
unless it is defocusing when $p$ is even)
for \eqref{KDV},
and proved its invariance under the flow
for $p = 3,  4$.
Recently, Richards \cite{R} established invariance of the Gibbs measure
for \eqref{KDV} when $p = 5$.
In an attempt to study more dynamical properties of \eqref{KDV}, 
one can construct Gibbs measure conditioned on mass
by an argument similar to Theorem \ref{thm1}.
In this case, an analogue of Theorem \ref{thm1} holds 
for all (even) $p$ when \eqref{KDV} is defocusing,
and for $p \leq 6$
when it is non-defocusing.
However, an analogue of Theorem \ref{thm2}
holds only for $p\leq 5$ due to lack of well-defined flow for gKdV \eqref{KDV} in the support
of the Gibbs measure when $p\geq 6$.
%
Note that KdV ($p = 3$)
and mKdV ($p = 4$)
are  completely integrable.
Hence, a question of ergodicity can be posed only for $p \geq 5$.
See Remark \ref{REM:cubic}.
\end{remark}

\begin{remark}\rm
An interesting but straightforward comment is that the momentum $P(u)$
is nothing but the L\'evy stochastic area 
of the planar  loop $(\text{Re}\,u(x), \text{Im}\,u(x))$, $0\le x<2\pi$,
\begin{align}\label{five}
P(u) & = i \int_{\T} u \cj{u}_x
= \int_{\T} (\text{Re}\,u) \, d (\text{Im}\,u) - (\text{Im}\,u) \, d (\text{Re}\,u).
\end{align}
Note that this is not the actual area enclosed by the loop, but a signed version.  
A Brownian loop has infinitely many self-intersections.  
Regularizing the Brownian loop gives a loop with finitely many self-intersections.
The `area' is then computed through the path integral above, with each subregion bounded by non-intersecting part of the loop having area counted positive or negative depending on  whether the boundary is traversed in 
the counterclockwise or clockwise direction, respectively.  This includes the fact that the areas inside
internal loops are multiply counted.  Removing the regularization gives the 
L\'evy stochastic area.  Remarkably, unlike other stochastic integrals, the limit does not depend on
the regularization procedure.  For example, one can check directly that the It\^o (left endpoint rule in the Riemann sum) and Stratonovich (midpoint rule) versions
of  \eqref{five} give the same result.  The stochastic area has attracted a great deal of attention.
L\'evy \cite{L} found the exact expression $\tfrac14 (\cosh (x/2))^{-2}$ for its density under the standard Brownian motion measure.  Our base Gaussian measure \eqref{Gauss1} is almost the same as the standard Brownian
motion, and the analogous computation can be performed (see Section 2.1.)  Our Gibbs measures
$\mu_0 = \mu_{a,b}$  are absolutely continuous with respect to the base Brownian motion, so most of the results about the stochastic area continue to hold, though, of course, there are no longer any exact formulas.  
The L\'evy area is basically the only new element  when one moves from 
the Wiener-It\^o chaos of order one to order two.
Therefore, it is a natural object to supplement the Brownian path itself, and this is the basis of the rough path theory \cite{LQ}.  It seems
a remarkable fact that the flow of  NLS preserves the 
 L\'evy area.

\end{remark}

\noindent
{\bf Acknowledgments:}
The authors would like to thank the anonymous referee for 
pointing out an error in the previous version of this paper
as well as for helpful comments.

\section{Proof of Theorem \ref{thm1}:
Construction of the conditioned Gibbs measures}
\subsection{Wiener measure conditioned on mass and momentum}
\label{SUBSEC:2.1}

In this subsection, we
construct the Wiener measure $P_0$ conditioned on mass $a$ and momentum $b$
for any {\it fixed} $a >0$ and $b \in \R$.
Given $P_\eps$ as in \eqref{Gauss2},
we define $P_0$ as a limit of $P_\eps$ by \eqref{Gauss4},
where $E$ is an arbitrary set in the $\s$-field $\mathcal{F}$.
In the following, we show that \eqref{Gauss4} indeed defines a probability measure.
For this purpose, we can simply take $E$ to be in some generating family of $\mathcal{F}$.
Let us choose
the increasing family $\mathcal{F}_N = \s ( g_n ; |n| \leq N)$
as such a generating family of $\mathcal{F}$.

Fix a nonnegative integer $N$ and a Borel set $F$ in $\mathbb{C}^{2N+1}$.
Let $E = \{ \omega: (  g_n; |n|\leq N ) \in F\}$.
Then, by \eqref{Gauss3}, we have
\begin{align*} 
P_\eps(E) & = P\bigg(  (g_n; |n| \leq N) \in F \, \Big| 
\int_\T |u|^2 \in A_\eps(a), 
\,   i \int_\T u \cj{u}_x  \in B_\eps(b)\bigg),
\end{align*}

\noi
where $A_\eps(a)$ and $B_\eps(b)$
are neighborhoods shrinking nicely to $a$ and $b$ as $\eps \to 0$.
That is, 
\begin{itemize}
\item[(a)] For each $\eps > 0$, we have
 \[A_\eps(a) \subset (a-\eps, a+ \eps) 
\quad \text{and} \quad  B_\eps(b) \subset (b-\eps, b+\eps).\]

\noi

\item[(b)] There exists $\al>0$, independent of $\eps$, 
such that 
\[|A_\eps(a)| > \al \eps
\quad \text{and} \quad |B_\eps(b)| > \al \eps.\]
\end{itemize}

\noi
By \eqref{G2}, we have
\begin{equation} \label{Xplancherel}
\int_\T |u(x)|^2 dx = \sum_{n\in\mathbb{Z}}  \jb{\wt{n}}^{-2} |g_n|^2
\quad \text{and}
\quad 
i \int_\T u \cj{u}_x dx = \sum_{n\in \mathbb{Z}} \jb{ \wt{n}}^{-2}\wt{n} |g_n|^2 ,
\end{equation}

\noi
where $\wt{n} = 2\pi n$ and  $\jb{\wt{n}} = \sqrt{1+\wt{n}^2}$. 
Therefore, by independence of $\{g_n\}_{|n|\leq N}$ and $\{g_n\}_{|n|\geq N+1 }$, we have
\begin{align}
P_\eps(E) & = \int_F 
\frac{P\Big( 
\sum_{|n|\geq N+1} \jb{\wt{n}}^{-2} |g_n|^2\in A_\eps(\wt{a}), 
\,   \sum_{|n|\geq N+1} \jb{ \wt{n}}^{-2}\wt{n} |g_n|^2  \in B_\eps(\wt{b})\Big)}
{P\Big( \sum_{n\in \mathbb{Z}} \jb{\wt{n}}^{-2} |g_n|^2\in A_\eps(a), 
\,   \sum_{n\in \mathbb{Z}} \jb{\wt{n}}^{-2} \wt{n} |g_n|^2  \in B_\eps(b)\Big)}
\label{Wiener1}
\\
& \hphantom{XXXXXXX}
\times \frac{e^{-\frac{1}{2}\sum_{|n|\leq N}  |\xi_n|^2}}{(2\pi)^{2N+1}}\prod_{|n|\leq N} d \xi_n,
\notag 
\end{align}

\noi
where $d \xi_n$ denotes the Lebesgue measure on $\mathbb{C}$,
and $A_\eps(\wt{a})$ and $B_\eps(\wt{b})$
are the translates of $A_\eps(a)$ and $B_\eps(b)$
centered at 
\begin{equation}
\wt{a} = a - \sum_{|n| \leq N} \jb{\wt{n}}^{-2} |\xi_n|^2, \quad
\text{and} \quad
\wt{b} = b - \sum_{|n| \leq N} \jb{\wt{n}}^{-2} \wt{n} |\xi_n|^2,
\end{equation}

\noi
respectively.

Now, define the density $f_N(a, b)$
by 
\begin{equation}\label{Xdensity}f_N(a, b) \, da  db = P\bigg( 
\sum_{|n|\geq N} \jb{\wt{n}}^{-2} |g_n|^2\in da, 
\,   \sum_{|n|\geq N} \jb{\wt{n}}^{-2} \wt{n} |g_n|^2  \in db\bigg).
\end{equation}

\noi
Then, we have the following lemma on the regularity of $f_N$. 
\begin{lemma} \label{LEM:Xft}
Let $\ft{f}_N$ be the characteristic function (Fourier transform) of $f_N$.
Then, we have $\ft{f}_N \in L^1(\R^2)$
with estimate: $\|\ft{f}_N\|_{L^1(\R^2)} <C(N) <\infty$,
where $C(N)$ is at most a power of $N$.
In particular, $f_N$ is bounded and uniformly continuous.
\end{lemma}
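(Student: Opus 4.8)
The plan is to compute the Fourier transform $\ft f_N$ explicitly and bound its $L^1(\R^2)$ norm by exploiting the exponential decay coming from the tail of the Gaussian series. First I would write $X_N = \sum_{|n|\ge N} \jb{\wt n}^{-2}|g_n|^2$ and $Y_N = \sum_{|n|\ge N} \jb{\wt n}^{-2}\wt n\, |g_n|^2$, so that $f_N$ is the joint density of $(X_N, Y_N)$. Since each $|g_n|^2$ is (up to a factor) an exponential random variable — more precisely $|g_n|^2$ has an exponential distribution with mean $1$, writing $g_n = \alpha_n + i\beta_n$ with $\alpha_n,\beta_n \sim N(0,\tfrac12)$ (or mean $0$, variance $1$ according to the paper's normalization, which only changes constants) — and the $g_n$ are independent, the characteristic function factorizes:
\begin{equation*}
\ft f_N(s,t) = \mathbb{E}\big[ e^{i s X_N + i t Y_N}\big]
= \prod_{|n|\ge N} \mathbb{E}\Big[ e^{i (s + t\wt n)\jb{\wt n}^{-2} |g_n|^2}\Big]
= \prod_{|n|\ge N} \frac{1}{1 - i c\,(s+t\wt n)\jb{\wt n}^{-2}}
\end{equation*}
for an appropriate constant $c>0$ (each factor is the characteristic function of a scaled exponential / the relevant Gaussian quadratic form). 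Taking absolute values, $|\ft f_N(s,t)| = \prod_{|n|\ge N} \big(1 + c^2 (s+t\wt n)^2 \jb{\wt n}^{-4}\big)^{-1/2}$.

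Next I would estimate this product. Keeping only finitely many well-chosen factors gives polynomial-in-$(s,t)$ decay with a large exponent. Concretely, for a fixed large integer $K \ge N$, retain the factors with $N \le |n| \le K$; each contributes $\big(1 + c^2(s+t\wt n)^2\jb{\wt n}^{-4}\big)^{-1/2}$, and for $|n|\le K$ we have $\jb{\wt n}^{-4}\gtrsim_K 1$, so $|\ft f_N(s,t)| \lesssim_K \prod_{N\le |n|\le K} \big(1 + c_K (s+t\wt n)^2\big)^{-1/2}$. Using distinct slopes $\wt n = 2\pi n$, the linear forms $s + t\wt n$ are "spread out": one checks that on any region, at least two of them are comparable to $\max(|s|,|t|)$, so the product decays like $(1+|s|+|t|)^{-M}$ for $M$ growing with $K$. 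Choosing $K$ so that $M > 2$ makes $\ft f_N \in L^1(\R^2)$, and tracking constants shows $\|\ft f_N\|_{L^1}$ is at most a power of $N$ (the constants $c_K$, and the number of retained factors, are polynomial in $N$ once $K$ is taken, say, $K = 2N$). The final clause — $f_N$ bounded and uniformly continuous — then follows immediately from Fourier inversion: $f_N(a,b) = (2\pi)^{-2}\int_{\R^2} e^{-i(as+bt)} \ft f_N(s,t)\, ds\, dt$ is a continuous function vanishing at infinity (Riemann–Lebesgue), with $\|f_N\|_\infty \le (2\pi)^{-2}\|\ft f_N\|_{L^1}$.

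The step I expect to be the main technical obstacle is the lower bound on the product $\prod \big(1 + c^2 (s+t\wt n)^2\jb{\wt n}^{-4}\big)$, i.e. showing the linear forms $s + t\wt n$ cannot all be simultaneously small: one needs a clean elementary lemma stating that for $n_1 \ne n_2$, $\max(|s+t\wt n_1|, |s+t\wt n_2|) \gtrsim |n_1 - n_2|^{-1}\max(|s|,|t|)$ (from solving the $2\times 2$ linear system with Vandermonde-type determinant $\wt n_1 - \wt n_2$), and then to run this over enough pairs to accumulate a decay exponent $M>2$ while keeping the $N$-dependence polynomial. Everything else — the factorization, taking absolute values, Fourier inversion for the regularity conclusion — is routine. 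I would also remark that since $f_N$ is a density, $\ft f_N(0,0) = 1$, which is consistent with and does not interfere with the integrability (the potential blow-up is only at $|(s,t)|\to\infty$, which is exactly what the product estimate controls).
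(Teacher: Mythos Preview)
Your proposal is correct and follows the same overall strategy as the paper: factorize $\ft f_N$ as an infinite product of characteristic functions of scaled exponential random variables, bound all but finitely many factors by $1$, and extract enough polynomial decay in $(s,t)$ from the retained factors to conclude $\ft f_N \in L^1(\R^2)$, with Fourier inversion giving the regularity of $f_N$.

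The one place where the paper is cleaner is the ``spreading'' step you flag as the main obstacle. Instead of your Vandermonde-type lemma on pairs $n_1 \ne n_2$, the paper exploits the symmetry $n \leftrightarrow -n$: grouping the product over $n \ge N$ as
\[
\ft f_N(s,t) = \prod_{n\ge N}\Big[(1-2i\jb{\wt n}^{-2}(s+t\wt n))(1-2i\jb{\wt n}^{-2}(s-t\wt n))\Big]^{-1},
\]
one has the elementary triangle-inequality bound $\max(|s+t\wt n|,\,|s-t\wt n|) \ge \max(|s|,\,|t\wt n|)$, which separates the $s$- and $t$-decay immediately without inverting any linear system. Then just the four values $n=N,\dots,N+3$ already yield $|\ft f_N(s,t)| \le C(N)\jb s^{-2}\jb t^{-2}$, with $C(N)$ manifestly a power of $N$. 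Your Vandermonde route works too and is the natural thing to try if one does not notice the $\pm n$ pairing; the pairing just makes the constant-tracking and the choice of how many factors to keep completely transparent.
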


\begin{proof}
By computing the characteristic function of $f_N$, we have
\begin{align}
\ft{f}_N(s, t)&  = \mathbb{E}\bigg[ \exp 
 \Big( is \sum_{|n|\geq N} \jb{\wt{n}}^{-2} |g_n|^2
 + it \sum_{|n| \geq N} \jb{\wt{n}}^{-2} \wt{n} |g_n|^2\Big) \bigg] \notag\\
& = \prod_{|n| \geq N} \mathbb{E}
\Big[ e^{ i (s  \jb{\wt{n}}^{-2} 
 + t \jb{\wt{n}}^{-2} \wt{n} ) |g_n|^2} \Big] \notag\\
& = \prod_{n \geq N}
\frac{1}{\big(1-2i \jb{\wt{n}}^{-2} (s  + t \,  \wt{n} ) \big)
 \big(1-2i \jb{\wt{n}}^{-2} (s  
 - t \, \wt{n} )\big)}\label{W2}.
\end{align}

\noi
For any $n \geq N$, we have
$ \max (s+t \wt{n}, s-t \wt{n}) \geq \max (s, t \wt{n}).$
Also, note that each factor in \eqref{W2} is bounded by 1.
Thus, considering the terms for $n = N, \dots, N+3$ in \eqref{W2}, we have
\begin{align*}
|\ft{f}_N(s, t)|
\leq C(N) \jb{s}^{-2}\jb{t}^{-2}, 
\end{align*}

\noi
where $C(N)$ is at most a power of $N$.
Therefore,  we have
$\| \ft{f}_N\|_{L^1_{s, t}} <C'(N) <\infty.$
Note that $C'(N)$ is at most a power of $N$.
We use this fact in Subsection \ref{SUBSEC:2.2}.
\end{proof}

By Lemma \ref{LEM:Xft}, we have, for any $N\geq 0$, 
\begin{align}
\notag
& \frac{P\Big( 
\sum_{|n|\geq N} \jb{n}^{-2} |g_n|^2\in A_\eps(\wt{a}), 
\,   \sum_{|n|\geq  N} \jb{n}^{-2} n |g_n|^2  \in B_\eps(\wt{b})\Big)}{|A_\eps(\wt{a})\times B_\eps(\wt{b})|} \\
& \hphantom{XXXXXX} = 
\frac{1}{|A_\eps(\wt{a})\times B_\eps(\wt{b})|}
\int_{A_\eps(\wt{a})\times B_\eps(\wt{b})} f_{N}(a', b') da' db'
\too f_{N}(\wt{a}, \wt{b}), \label{W4}
\end{align}

\noi
as $\eps \to 0$.
By the uniform continuity of $f_N$, 
this convergence is uniform in $\wt{a}$ and $\wt{b}$.

In taking the limit of \eqref{Wiener1} as $\eps \to 0$,
the expression $f_0(a, b)$, i.e. \eqref{Xdensity} with $N = 0$,  appears in the denominator.
Hence, we need to show that $f_{0}(a, b) > 0$ for any $a>0$ and  $b \in \R$.
Indeed, we have
\begin{proposition}\label{LEM:POS}
Let $a> 0$ and $b \in \R$.
Then, we have $f_{0}(a, b) > 0$.
\end{proposition}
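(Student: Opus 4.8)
The plan is to isolate a single pair of high Fourier modes whose contribution already carries a two-dimensional density supported on an open cone wide enough to contain the target point $(a,b)$, and then to dispose of all the remaining modes by a soft support argument. Writing, by \eqref{Xplancherel}, $X:=\int_\T|u|^2=\sum_{n\in\Z}\jb{\wt n}^{-2}|g_n|^2$ and $Y:=i\int_\T u\cj{u}_x=\sum_{n\in\Z}\jb{\wt n}^{-2}\wt n\,|g_n|^2$, the key observation is that the mode $\pm n$ contributes to $(X,Y)$ a random vector in the direction $(1,\pm\wt n)=(1,\pm2\pi n)$, whose slope $\pm2\pi n$ is unbounded in $n$. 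Consequently the two modes $\pm N$ alone produce a density supported on the cone
\[ C_N:=\{(x,y):x>0,\ |y|<\wt N x\}, \]
and since $\wt N a=2\pi N a\to\infty$ as $N\to\infty$, we may and do fix, for the given $a>0$ and $b\in\R$, an integer $N\ge1$ with $2\pi N a>|b|$, so that $(a,b)\in C_N$.

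Split $(X,Y)=V+W$ with
\[ V:=\Big(\jb{\wt N}^{-2}(|g_N|^2+|g_{-N}|^2),\ \wt N\jb{\wt N}^{-2}(|g_N|^2-|g_{-N}|^2)\Big),\qquad W:=\sum_{n\ne\pm N}\Big(\jb{\wt n}^{-2}|g_n|^2,\ \jb{\wt n}^{-2}\wt n\,|g_n|^2\Big), \]
so that $V$ and $W$ are independent. The linear map $(u,v)\mapsto\big(\jb{\wt N}^{-2}(u+v),\,\wt N\jb{\wt N}^{-2}(u-v)\big)$ carries $(0,\infty)^2$ bijectively onto $C_N$, so changing variables in the law of $(|g_N|^2,|g_{-N}|^2)$ shows that $V$ has a density of the form
\[ h_N(x,y)=c_N\,e^{-\ld_N x}\,\ind_{C_N}(x,y) \]
for explicit constants $c_N,\ld_N>0$; in particular $h_N$ is lower semicontinuous (a positive continuous function times the indicator of the open set $C_N$) and strictly positive on all of $C_N$. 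Since $V$ is independent of $W$, the law of $(X,Y)$ has the density $h_N*\mu_W$, where $\mu_W$ is the law of $W$, and by Fatou's lemma this convolution is again lower semicontinuous. On the other hand, Lemma \ref{LEM:Xft} with $N=0$ guarantees that $(X,Y)$ has a continuous density $f_0$. A lower semicontinuous function that agrees almost everywhere with a continuous function is bounded above by it everywhere; hence $f_0\ge h_N*\mu_W$ pointwise, i.e.
\[ f_0(a,b)\ \ge\ \int_{\R^2}h_N(a-r,\,b-s)\,\mu_W(dr,ds). \]
Since $(a,b)\in C_N$ and $C_N$ is open, there is $\dl>0$ with $(a-r,b-s)\in C_N$ whenever $\max(|r|,|s|)<\dl$, and there $h_N(a-r,b-s)=c_N e^{-\ld_N(a-r)}\ge c_N e^{-\ld_N(a+\dl)}=:c_0>0$. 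Therefore
\[ f_0(a,b)\ \ge\ c_0\,P\big(|W^{(1)}|<\dl,\ |W^{(2)}|<\dl\big), \]
writing $W=(W^{(1)},W^{(2)})$, and it remains only to show that this probability is positive for every $\dl>0$; equivalently, that $0$ lies in the support of $\mu_W$.

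This last step is routine. Split the indices $n\ne\pm N$ into the finite set $L:=\{n:|n|\le M,\ n\ne\pm N\}$ and the tail $T:=\{n:|n|>M\}$, for $M$ large, and write $W=W_L+W_T$ accordingly; the $g_n$ with $n\in L$ and those with $n\in T$ are independent. The variable $\sum_{n\in L}|g_n|^2$ is a finite sum of independent a.s.\ positive random variables, hence takes arbitrarily small values with positive probability, and on the event $\{\sum_{n\in L}|g_n|^2<\eta\}$ both coordinates of $W_L$ are, by $\jb{\wt n}^{-2}\le1$ and $|\wt n|\jb{\wt n}^{-2}\le\tfrac12$, at most $\eta$ in modulus. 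For the tail, $\mathbb{E}[W_T^{(1)}]=2\sum_{|n|>M}\jb{\wt n}^{-2}\to0$, while $\mathbb{E}[W_T^{(2)}]=0$ (by the symmetry $n\leftrightarrow-n$, since $\jb{\wt n}$ is even, $\wt n$ is odd and $T$ is symmetric) and $\mathrm{Var}(W_T^{(2)})=4\sum_{|n|>M}(\jb{\wt n}^{-2}\wt n)^2\to0$ as $M\to\infty$; so Markov's and Chebyshev's inequalities give $P\big(|W_T^{(1)}|<\dl/2,\ |W_T^{(2)}|<\dl/2\big)\to1$. Choosing $M$ large depending on $\dl$, then $\eta<\dl/2$, and using the independence of $W_L$ and $W_T$, we conclude $P\big(|W^{(1)}|<\dl,\ |W^{(2)}|<\dl\big)>0$. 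There is no serious obstacle here: the one genuinely substantive ingredient is the cone observation of the first paragraph — that a single pair of high modes already yields a density positive on a cone wide enough to reach any prescribed $(a,b)$ with $a>0$ — while the passage from the almost-everywhere identity to the pointwise bound (handled via lower semicontinuity and the continuity of $f_0$ from Lemma \ref{LEM:Xft}) and the support computation are both soft.
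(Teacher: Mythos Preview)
Your proof is correct and takes a genuinely different route from the paper's.

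The paper argues by contradiction via an auxiliary Lemma~\ref{LEM:ZERO}: assuming $f_0(a^*,b^*)=0$, it peels off modes one pair at a time, showing that the zero of $f_N$ at $(a^*,b^*)$ propagates to a triangular region for $f_{N+1}$ whose opening angle grows with $N$; iterating, $f_N\equiv0$ on a fixed box $B$ for all large $N$. Since $\int f_N=1$ and tail estimates force $P((X_N,Y_N)\notin B)\to0$, this is impossible.

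You instead go directly: isolate a single pair $\pm N$ with $2\pi N a>|b|$, so that the explicit density $h_N$ of their contribution is strictly positive on an open cone already containing $(a,b)$; then $f_0=h_N*\mu_W$ a.e., and the lower-semicontinuity/continuity comparison (using Lemma~\ref{LEM:Xft}) upgrades this to the pointwise bound $f_0(a,b)\ge c_0\,P(|W|<\dl)$, which is positive because $0$ lies in the support of $W$.

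What each approach buys: the paper's recursive lemma exposes a structural propagation property of the family $\{f_N\}$ that may be of independent use. Your argument is shorter, avoids Lemma~\ref{LEM:ZERO} entirely, and even yields an explicit (if crude) lower bound on $f_0(a,b)$ in terms of a tail probability for the remaining modes. It also makes transparent why any $a>0$, $b\in\R$ works: the cone $C_N$ can be opened as wide as one likes. One minor point: in the tail split you should take $M\ge N$ so that $T=\{|n|>M\}$ really excludes $\pm N$; this is implicit in ``$M$ large'' but worth stating.
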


\noi
Proposition \ref{LEM:POS} is intuitively obvious.
However, since $f_0$ involves an infinite number of random variables,
we were not able to find any reference.
The proof will be given at the end of this subsection.

\medskip
Putting everything together, we have
\begin{align}
\frac{P\Big( 
\sum_{|n|\geq N+1} \jb{\wt{n}}^{-2} |g_n|^2\in A_\eps(\wt{a}), 
\,   \sum_{|n|\geq N+1} \jb{ \wt{n}}^{-2}\wt{n} |g_n|^2  \in B_\eps(\wt{b})\Big)}
{P\Big( \sum_n \jb{\wt{n}}^{-2} |g_n|^2\in A_\eps(a), 
\,   \sum_n \jb{\wt{n}}^{-2} \wt{n} |g_n|^2  \in B_\eps(b)\Big)}
\too \frac{f_{N+1}(\wt{a}, \wt{b})}{f_{0}(a, b)},
\label{W5} 
\end{align}

\noi
where the convergence is uniform in $\wt{a}$ and $\wt{b}$.
Moreover, 
the left hand side of \eqref{W5} is uniformly bounded
for small $\eps > 0$ (for {\it fixed} $a$ and $b$), 
since
$\|f_{N+1}\|_{L^\infty}  \leq \|\ft{f}_{N+1}\|_{L^1} <\infty$
and $f_0(a, b) > 0$.
Hence, by \eqref{Gauss4},  \eqref{Wiener1}, and Lebesgue dominated convergence theorem, 
we have
\begin{align}
P_0(E) = \lim_{\eps\to0}
P_\eps(E) & = \int_F 
\frac{f_{N+1}(\wt{a}, \wt{b})}{f_{0}(a, b)} \frac{e^{-\frac{1}{2}\sum_{|n|\leq N}  |\xi_n|^2}}
{(2\pi)^{2N+1}}\prod_{|n|\leq N} d \xi_n.
\notag 
\end{align}

\noi
This shows that $P_0$ is a well-defined probability measure.
Lastly, note that it basically follows from the definition that $P_\eps$ converges weakly to $P_0$.

\medskip

We will need the following lemma for the proof of  Proposition \ref{LEM:POS}. 
\begin{lemma} \label{LEM:ZERO}
Assume that $f(a^*, b^*) = 0$ for some $a^* > 0$ and $b^* \in \R$.
Then, there exists sufficiently large $N_0 \in \mathbb{N}$
such that $f_N(a, b) = 0$ on
\begin{equation} \label{B}
 B:= \{(a, b) \in \R_+\times \R: a \leq \tfrac{1}{2}a^*, \ |b| \leq |b^*|+1\}
\end{equation}

\noi
for all $N \geq N_0$.
\end{lemma}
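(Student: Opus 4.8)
The plan is to exploit the defining relation between $f_N$ and $f_{N+1}$ obtained by peeling off the $n = N$ term (and its partner $n = -N$). Recall from \eqref{Xdensity} that $f_N(a,b)\,da\,db$ is the law of the pair $\big(\sum_{|n|\ge N}\jb{\wt n}^{-2}|g_n|^2,\ \sum_{|n|\ge N}\jb{\wt n}^{-2}\wt n|g_n|^2\big)$. Since $\{g_N,g_{-N}\}$ is independent of $\{g_n:|n|\ge N+1\}$, the density $f_N$ is the convolution of $f_{N+1}$ with the joint law $\rho_N$ of the two-dimensional vector $X_N := \jb{\wt N}^{-2}(|g_N|^2+|g_{-N}|^2,\ \wt N|g_N|^2-\wt N|g_{-N}|^2)$. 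Writing $r = |g_N|^2$, $s = |g_{-N}|^2$, each of $r,s$ is an independent Exponential-type variable (the square modulus of a standard complex Gaussian), so $\rho_N$ is supported on the cone $\{(a,b): a>0,\ |b|\le \wt N a\}$ and is \emph{strictly positive} in the interior of that cone, with an explicit (essentially exponential) density. Thus for any $(a,b)$,
\[
f_N(a,b) = \int f_{N+1}(a - a',\, b - b')\,\rho_N(a',b')\,da'\,db'.
\]

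The key observation is: if $f_N(a,b) = 0$ at some point, then since both $f_{N+1}\ge 0$ and $\rho_N > 0$ on the open cone, the integrand must vanish $da'db'$-a.e., forcing $f_{N+1}$ to vanish a.e. on the translated cone $(a,b) - \{(a',b'): a'>0,\ |b'|< \wt N a'\}$; by the continuity of $f_{N+1}$ (Lemma \ref{LEM:Xft}) this vanishing is genuine, not just a.e. Now start from the hypothesis $f(a^*,b^*) = f_0(a^*,b^*) = 0$ — wait, more carefully: the hypothesis is stated for $f$, which I read as $f_0$; applying the observation at $N=0$ gives that $f_1$ vanishes on the whole translated open cone emanating downward-and-inward from $(a^*,b^*)$. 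Iterating, $f_{N+1}$ vanishes on an ever-growing region, because at each stage we subtract another cone. Since the apertures $\wt N = 2\pi N$ of these cones widen as $N$ grows, the union of the subtracted cones eventually swallows any fixed bounded set of the form \eqref{B}: concretely, once $N$ is large enough that a cone of half-angle corresponding to slope $\wt N$ based at $(a^*, b^*)$ (or at a nearby already-killed point with $a$-coordinate close to $a^*$) contains $B$, we get $f_N \equiv 0$ on $B$. That yields the desired $N_0$.

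The main obstacle — and the step deserving the most care — is the bookkeeping that shows the accumulated vanishing region actually contains all of $B$ for large $N$, rather than merely growing. One must track that the subtracted cones are anchored at points $(a,b)$ with $a$ bounded \emph{below} away from $0$: if the anchor points could drift to $a = 0$, the cone of directions $\{a' > 0\}$ would only reach points with strictly smaller $a$-coordinate and one might never cover a set like $B$ whose points have $a$ up to $\tfrac12 a^*$. The fix is to note that after the first step $f_1$ vanishes on a full neighborhood structure near $(a^*,b^*)$ including points with $a$-coordinate in an interval around $a^*$ (the subtracted set $\{a' > 0\}$ is a full half-space in the $a'$ direction only near $b' = 0$, but $\rho_0$ is positive on the open cone $|b'| < \wt{\,0\,}\cdot a'$ — here $\wt 0 = 0$, a degenerate case). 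So one should instead begin the iteration at $N = 1$ (where $\wt 1 = 2\pi > 0$ and the cone is genuinely two-dimensional), after first checking that $f_0(a^*,b^*) = 0 \Rightarrow f_1(a^*,b^*) = 0$ directly from $f_0 = f_1 * \rho_0$ with $\rho_0$ the law of $\jb{\wt 0}^{-2}(|g_0|^2, 0) = (|g_0|^2, 0)$ supported on $\{b' = 0,\ a' > 0\}$ — this kills $f_1$ along the ray $\{(a^*-a', b^*): 0 < a' < a^*\}$, in particular at points with $a$-coordinate ranging over all of $(0, a^*)$, and then continuity plus the widening genuinely two-dimensional cones from $N \ge 1$ finish the job. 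I expect the clean way to present this is: (1) establish the convolution identity and positivity of $\rho_N$ on its cone; (2) prove the "vanishing propagates along cones" lemma using nonnegativity and continuity; (3) run the induction, quantifying how the half-angle $\arctan(\wt N)^{-1}$ shrinks so that finitely many steps cover $B$.
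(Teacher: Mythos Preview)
Your proposal is correct and follows the same route as the paper: write $f_N=f_{N+1}*\rho_N$ with $\rho_N>0$ on the open cone $\{a'>0,\,|b'|<\wt N\,a'\}$, use nonnegativity together with the continuity of $f_{N+1}$ (Lemma~\ref{LEM:Xft}) to propagate the zero along the translated cone, handle the degenerate $N=0$ step separately, and iterate. Your worry about drifting anchors is unnecessary once you observe that $(a^*,b^*)$ lies in the \emph{closure} of each translated cone, so continuity gives $f_{N+1}(a^*,b^*)=0$ at every step; re-anchoring always at $(a^*,b^*)$ then yields directly that $f_{N+1}\equiv 0$ on the closed triangle $\{a\le a^*,\ |b-b^*|\le 2\pi N(a^*-a)\}$, which contains $B$ for large $N$ (the paper also invokes the symmetry $f_N(a,b)=f_N(a,-b)$, but only to trim the final constant).
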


\begin{proof}
First, note that, by symmetry, we have 
\begin{align} \label{Xsymmetry}
f_N(a, b) = f_N(a, -b)
\end{align}

\noi
for any $a, b \in \R$ and $N \geq 0$.
Defining $X_N$ and $Y_N$ by 
\begin{equation}\label{XY}
X_N = \sum_{|n|\geq N} \jb{\wt{n}}^{-2} |g_n|^2
\quad \text{and} \quad
Y_N = \sum_{|n|\geq N} \jb{\wt{n}}^{-2} \wt{n} |g_n|^2 ,
\end{equation}

\noi
we have
$X_0 = X_1 + |g_0|^2$ and $Y_0 = Y_1$.
Note that $X_1$ and $|g_0|^2$ are independent.
Thus, we can write $f_{0}$ as 
$f_{0} = f_1 *_a \chi_2^2$,
where $\chi_2^2$ is the density for the (rescaled) chi square 
distribution with two degrees of freedom,
corresponding to $|g_0|^2 = (\text{Re} \, g_0)^2 + (\text{Im} \, g_0)^2$,
and $*_a$ denotes the convolution only in the first variable of $f_1$. 
Recall that $\chi_2^2 (x)>0$ for $x > 0$ and $= 0$ for $x < 0$.

Now, suppose that $f_{0}(a^*, b^*) = 0$ for some $a^*>0$ and $b^*\in \R$.
By \eqref{Xsymmetry}, assume that $b^* \geq 0$.
Then, from 
\begin{align*}
0 = f_{0}(a^*, b^*) & = \int_{x>0} f_1(a^*-x, b^*) 
\chi_2^2(x) dx
\end{align*}

\noi
and the positivity of $\chi^2_2$ on $\R_+$,
we have 
$f_{1}(a, b^*) = 0$ for $a \leq  a^*$.
(Recall that $f_1$ is continuous by Lemma \ref{LEM:Xft}.)

Let $c_1(n)$ and $c_2(n)$ be given by
\begin{equation} \label{C}
c_1 (n) = (1+4\pi^2n^2)^{-1} \quad \text{and}  \quad c_2(n) =  2\pi n c_1(n), \quad n \in \mathbb{N}.
\end{equation}

\noi
Then, 
from \eqref{XY}, we have 
\[X_1 = X_2 + c_1(1)(|g_1|^2 + |g_{-1}|^2)
\quad \text{and}\quad
Y_1 = Y_2 + c_2(1) (|g_1|^2 - |g_{-1}|^2).\]

\noi
Since $f_1(a^*, b^*) = 0$, we have
\begin{align} \label{Xdensity2}
0 = f_1(a^*, b^*)
= \int_0^{\infty}\int_0^{\infty} f_2\big(a^* - c_1(1)(x + y), b^*- c_2(1)(x - y)\big) \chi^2_2(x)\chi^2_2(y)dx dy.
\end{align}

\noi
By change of variables $p = x+ y$ and $q = x- y$,
we can write  \eqref{Xdensity2}  as 
\begin{align*} 
0 = c\iint_{\substack{p>0\\|q|\leq p}} f_2(a^* - c_1(1) p, b^*- c_2(1) q) \chi^2_2(\tfrac{p+q}{2})\chi^2_2(\tfrac{p-q}{2})dp dq.
\end{align*}

\noi
This implies that $f_2(a, b) = 0$ 
on a triangular region
\begin{equation*}
A_2 : = \{ (a, b) \in \R_+ \times \R: a \leq a^*, \ |b - b^*| \leq 2\pi (a^* - a)\}.
\end{equation*}

\noi
In particular, $f_2(a^*, b^*) = 0$.
From \eqref{XY}, we have 
\[X_2 = X_3 + c_1(2)(|g_2|^2 + |g_{-2}|^2)
\quad \text{and}\quad
Y_2 = Y_3 + c_2(2) (|g_2|^2 - |g_{-2}|^2),\]

\noi
where $c_1(2)$ and $c_2(2)$ are as in \eqref{C}.
Since $f_2(a^*, b^*) = 0$, we have
\begin{align} \label{Xdensity4}
0 = f_2(a^*, b^*)
= \int_0^{\infty}\int_0^{\infty} f_3\big(a^* - c_1(2)(x + y), b^*- c_2(2)(x - y)\big) \chi^2_2(x)\chi^2_2(y)dx dy.
\end{align}

\noi
Once again, by change of variables $p = x+ y$ and $q = x- y$,
we can write  \eqref{Xdensity4}  as 
\begin{align*} 
0 = c\iint_{\substack{p>0\\|q|\leq p}} f_3(a^* - c_1(2) p, b^*- c_2(2) q) \chi^2_2(\tfrac{p+q}{2})\chi^2_2(\tfrac{p-q}{2})dp dq.
\end{align*}

\noi
This implies that $f_3(a, b) = 0$ 
on a triangular region
\begin{equation*}
A_3 : = \{ (a, b) \in \R_+ \times \R: a \leq a^*, \ |b - b^*| \leq 4\pi (a^* - a)\}.
\end{equation*}

\noi
In particular, we have $f_3(a^*, b^*) = 0$
and thus we can repeat the argument.
In general, from $f_N(a^*, b^*) = 0$,
we can show that $f_{N+1}(a, b) = 0$ 
on a triangular region
\begin{equation*}
A_{N+1} : = \{ (a, b) \in \R_+ \times \R: a \leq a^*, \ |b - b^*| \leq 2\pi N (a^* - a)\}
\end{equation*}

\noi
by simply noting $c_2(N)/c_1(N) = 2\pi N$.
By symmetry \eqref{Xsymmetry}, we have $f_{N+1}(a, b) = 0$
also on 
\begin{equation*}
\wt{A}_{N+1} : = \{ (a, b) \in \R_+ \times \R: a \leq a^*, \ |b + b^*| \leq 2\pi N (a^* - a)\}
\end{equation*}

\noi
Finally, by choosing $N_0$ large such that 
$\pi N_0 a^* \geq \max (1, b^*)$,
we see that 
$B \subset A_{N} \cup \wt{A}_{N}$ for $N \geq N_0$
and hence $f_N(a, b) = 0$ on $B$ for $N\geq N_0$.
\end{proof}

Finally, we conclude this subsection by presenting the proof of Proposition \ref{LEM:POS}.
\begin{proof}[Proof of Proposition \ref{LEM:POS}]
Suppose that $f_{0}(a^*, b^*) = 0$ for some $a^*>0$ and $b^*\in \R$.
By Lemma \ref{LEM:ZERO}, there exists $N_0 \in \mathbb{N}$
such that 
$f_N = 0$ on $B$ for all $N\geq N_0$, 
where $B$ is defined in \eqref{B}.
Recall that $f_N$ is nonnegative and $f_N(a, b) = 0$ for $a< 0$.
Then, by $(a, b) \in \R_+ \times \R \subset B \cup \{ a >\frac{1}{2}a^*\}
\cup \{ |b| \geq |b^*|+1 \}$, we have
\begin{align}
1 & = \int_\R \int_0^\infty f_N(a, b) da db\notag \\
& \leq \iint_{B} f_N(a, b) da db
+ \iint_{a > \frac{1}{2}a^*} f_N(a, b) da db
+ \iint_{|b| > |b^*|+1 } f_N(a, b) da db \notag \\
& = 0+P\big(X_N >  \tfrac{1}{2}a^*\big)
+ P\big( |Y_N| > |b^*|+1 \big), \label{Z}
\end{align}

\noi
for all $N \geq N_0$, where $X_N$ and $Y_N$ are as in \eqref{XY}.
Once we prove
\begin{align}
P\big(X_N >  \tfrac{1}{2}a^*\big) & <\tfrac{1}{2},  \label{X}\\
 P\big( |Y_N| > |b^*|+1 \big) & <\tfrac{1}{2}, \label{Y}
\end{align}

\noi
for some $N$, \eqref{Z} together with \eqref{X} and  \eqref{Y} leads to a contradiction, 
and hence $f_0(a, b) > 0$ for all $a >0$ and $b \in \R$.

Therefore, it remains to prove \eqref{X} and \eqref{Y} for large $N$.
First, we prove \eqref{Y}.
Write $Y_N$ as 
\[ Y_N = \sum_{n \geq N} \frac{2\pi n}{1+4\pi^2 n^2} \big(|g_n|^2 - |g_{-n}|^2\big).\]

\noi
Since $\mathbb{E} \big[ |g_n|^2 - |g_{-n}|^2\big] = 0$, we have
$\mathbb{E}\big[|Y_N|^2\big] \leq C N^{-1}$.
Then, by Chebyshev's inequality, we conclude that 
\[P\big( |Y_N| > |b^*|+1 \big) \leq \mathbb{E}\big[|Y_N|^2\big] \leq C N^{-1}.\]

\noi
Hence, there exists $N_1$ such that \eqref{Y} holds for all $N\geq N_1$.

Next, we prove \eqref{X}.
Fix large dyadic $N_2 = 2^k$ (to be chosen later).
Let $\s_j = C 2^{-\frac{1}{2} j}$ 
such that $\sum_{j = 1}^\infty \s_j = 1$.
Then, for $N \geq N_2$, we have
\begin{align*}
P\big(X_N >  \tfrac{1}{2}a^*\big)
& \leq \sum_{j = k}^\infty 
P \bigg( \Big(\sum_{2^j \leq |n| <2^{j+1}} (1+4\pi^2 n^2)^{-2} |g_n|^2 \Big)^\frac{1}{2}
> \tfrac{1}{2}\s_j a^*\bigg)\\
& \leq \sum_{j = k}^\infty 
P \bigg( \Big(\sum_{2^j \leq |n| <2^{j+1}}  |g_n|^2 \Big)^\frac{1}{2}
> c_{a^*} \s_j 2^j \bigg),
\end{align*}

\noi
where $c_{a^*}>0$ is a constant depending only on $a^*$.
By the large deviation estimate (e.g. see Lemma 4.2 in \cite{OQV}), we obtain
\begin{align*}
P\big(X_N >  \tfrac{1}{2}a^*\big)
& \leq \sum_{j = k}^\infty 
e^{-c'_{a^*} \s_j^2 2^{2j}}
 \leq 
e^{-\wt{c}_{a^*}  \, 2^{k}} <\tfrac{1}{2}
\end{align*}

\noi
for sufficiently large $k\in \mathbb{N}.$
By choosing $N\geq \max (N_0, N_1, N_2)$, 
\eqref{Z} together with \eqref{X} and  \eqref{Y} leads to a contradiction. 
This completes the proof of Proposition \ref{LEM:POS}.
\end{proof}

\subsection{Gibbs measure conditioned on mass and momentum}
\label{SUBSEC:2.2}

In the previous subsection, we constructed the Wiener measure $P_0$
conditioned on mass and momentum
as a limit of conditioned Wiener measures $P_\eps$.
In this subsection, we define
the conditioned Gibbs measure $\mu_0 = \mu_{a, b}$
by \eqref{Gibbs1}.
In the defocusing case, 
\eqref{Gibbs1} defines a probability measure. 
In the focusing case, 
however, 
we need to show \eqref{weight1};
the weight $e^{\frac{1}{p} \int_\T |u|^p }$ is
integrable with respect to $P_0$
for $p\leq 6$
(with sufficiently small mass when $p = 6$.)

Bourgain \cite{B2} proved a similar integrability result 
of the weight $e^{\frac{1}{p} \int_\T |u|^p }$
with respect to the (unconditioned) Wiener measure $P$ in \eqref{Gauss1}
via dyadic pigeonhole principle
and a large deviation estimate. 
In the following, we also use dyadic pigeonhole principle
and a large deviation estimate
(for the conditioned Wiener measure $P_0$)
to show that the conditioned Gibbs measure $\mu_0$
is a well-defined probability measure.
Indeed, 
Lemma \ref{LEM:devi} below establishes a uniform large deviation estimate for $P_\eps$, $\eps>0$,
and we prove the $L^1$-boundedness of the weight $e^{\frac{1}{p} \int_\T |u|^p }$
with respect to $P_\eps$, uniformly in sufficiently small $\eps>0$.
See \eqref{I1}.

First, we present a {\it uniform} large deviation lemma
for the conditioned Wiener measure $P_\eps$, $\eps > 0$.

\begin{lemma} \label{LEM:devi}
Let $R \geq 5 N^\frac{1}{2}$ and $M \sim N$.
Then, we have
\begin{equation} \label{devi1}
P_\eps\bigg( \sum_{|n-M| \leq N } |g_n|^2 \geq R^2\bigg) 
\leq C e^{-\frac{1}{8} R^2}
\end{equation}

\noi
uniformly for  sufficiently small $\eps\geq0$.
\end{lemma}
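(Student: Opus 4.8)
\medskip

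The plan is to estimate the conditional probability in \eqref{devi1} by comparing it to an \emph{unconditioned} Gaussian probability, using the Fourier-analytic machinery of Subsection \ref{SUBSEC:2.1}. Write the event $E_{R} = \{\sum_{|n-M|\le N}|g_n|^2 \ge R^2\}$. The frequencies $|n - M| \le N$ all have $|n| \sim N$ (since $M \sim N$), so they are ``high'' frequencies relative to a fixed cutoff; this is the regime where the construction of Subsection \ref{SUBSEC:2.1} gives uniform control. First I would take $\eps > 0$ and express $P_\eps(E_R)$ via \eqref{Gauss3}, i.e. as a ratio $P(E_R \cap D_\eps)/P(D_\eps)$, where $D_\eps = \{\int_\T|u|^2 \in A_\eps(a),\ i\int_\T u\cj u_x \in B_\eps(b)\}$. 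The denominator, after dividing by $|A_\eps(a)\times B_\eps(b)|$, converges to $f_0(a,b)$ as $\eps\to 0$ by \eqref{W4}, and by Proposition \ref{LEM:POS} this limit is strictly positive; hence the denominator is bounded below by $c(a,b)\,|A_\eps(a)\times B_\eps(b)| \ge c\,\alpha^2\eps^2$ for small $\eps$, using hypothesis (b) on the shrinking neighborhoods.

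\medskip

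For the numerator, the key point is that $E_R$ depends only on the ``block'' variables $\{g_n : |n-M|\le N\}$, which are independent of the remaining variables. Split $\int_\T|u|^2 = \sum_{|n-M|\le N}\jb{\wt n}^{-2}|g_n|^2 + R_1$ and $i\int_\T u\cj u_x = \sum_{|n-M|\le N}\jb{\wt n}^{-2}\wt n|g_n|^2 + R_2$, where $R_1, R_2$ involve only the complementary variables. Conditioning on the block values and using the density $f_{\mathrm{comp}}$ of $(R_1, R_2)$ — which is bounded by $\|\ft f_{\mathrm{comp}}\|_{L^1}$, a quantity controlled exactly as in Lemma \ref{LEM:Xft} (the complement omits only finitely many high-frequency factors, so the same ``four-factor'' decay argument applies and the bound is at most a power of $N$) — one gets
\begin{equation*}
P(E_R \cap D_\eps) \le \|f_{\mathrm{comp}}\|_{L^\infty}\,|A_\eps(a)\times B_\eps(b)|\,\cdot\, \mathbb{E}\Big[\ind_{E_R}\Big] \le C(N)\,\eps^2\,P\Big(\sum_{|n-M|\le N}|g_n|^2 \ge R^2\Big).
\end{equation*}
The $\eps^2$ factors cancel against the denominator, leaving $P_\eps(E_R) \le C(N)\, P(\sum_{|n-M|\le N}|g_n|^2 \ge R^2)$ uniformly in small $\eps$. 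The case $\eps = 0$ follows by the same computation with the densities evaluated pointwise, or by passing to the limit.

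\medskip

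It remains to bound the unconditioned tail $P(\sum_{|n-M|\le N}|g_n|^2 \ge R^2)$, a sum of $\sim 2N$ independent standard complex Gaussians' squared moduli (a chi-square with $\sim 4N$ degrees of freedom). By the standard exponential Markov/large deviation bound (e.g. Lemma 4.2 in \cite{OQV}), for $R \ge 5N^{1/2}$ this is $\le C e^{-cR^2}$; tracking constants, the Gaussian chi-square tail gives the exponent $\tfrac14 R^2$ minus a term linear in $N$, and the hypothesis $R^2 \ge 25 N$ absorbs the loss to yield $\le C e^{-\frac18 R^2}$. The main obstacle is the bookkeeping that ensures the polynomial-in-$N$ prefactor $C(N)$ coming from $\|f_{\mathrm{comp}}\|_{L^\infty}$ is genuinely dominated by the Gaussian tail; this is exactly why the hypothesis demands $R$ at least of order $N^{1/2}$ with a concrete constant, so that $C(N)e^{-\frac14 R^2} \le Ce^{-\frac18 R^2}$. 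The rest is routine: assemble the ratio bound, invoke Proposition \ref{LEM:POS} for the denominator, and conclude uniformity in $\eps$.
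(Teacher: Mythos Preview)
Your proof is correct and is essentially the paper's own argument: both use the density-ratio representation of Subsection \ref{SUBSEC:2.1} (bounding the complementary density $\wt f_{N+1}=f_{\mathrm{comp}}$ polynomially in $N$ via the method of Lemma \ref{LEM:Xft}, and bounding the denominator below via Proposition \ref{LEM:POS}), and then absorb the resulting $\mathrm{poly}(N)\cdot 2^{2N+1}$ loss into $e^{-\frac18 R^2}$ using $R^2\ge 25N$. The only cosmetic difference is ordering: the paper applies exponential Chebyshev to $P_\eps$ first and then bounds the conditional moment generating function $\mathbb{E}_{P_\eps}\big[e^{\frac14\sum|g_n|^2}\big]\lesssim 2^{3N}$ via the density ratio, whereas you first compare $P_\eps(E_R)\le C(N)\,P(E_R)$ and then apply Chebyshev to the unconditioned chi-square tail --- the arithmetic is identical.
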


\begin{proof}
By Chebyshev's inequality, we have
\begin{equation} \label{devi0}
P_\eps\bigg( \sum_{|n-M| \leq N } |g_n|^2 \geq R^2\bigg) 
\leq e^{-t R^2} \mathbb{E}_{P_\eps} \Big[e^{t \sum_{|n-M| \leq N } |g_n|^2}\Big].
\end{equation}

\noi
Set $t = \frac{1}{4}$.
We estimate 
$\mathbb{E}_{P_\eps} \Big[e^{\frac{1}{4} \sum_{|n-M| \leq N } |g_n|^2}\Big]$
in the following.
As in \eqref{Wiener1}, we can write it as
\begin{align}
\mathbb{E}_{P_\eps} & \Big[e^{\frac{1}{4} \sum_{|n-M| \leq N } |g_n|^2}\Big]\notag \\
 & = \int_{\C^{2N+1}}
\frac{P\Big( 
\sum_{|n-M|\geq N+1 } \jb{\wt{n}}^{-2} |g_n|^2\in A_\eps(\wt{a}), 
\,   \sum_{|n-M|\geq N+1} \jb{ \wt{n}}^{-2}\wt{n} |g_n|^2  \in B_\eps(\wt{b})\Big)}
{P\Big( \sum_n \jb{\wt{n}}^{-2} |g_n|^2\in A_\eps(a), 
\,   \sum_n \jb{\wt{n}}^{-2} \wt{n} |g_n|^2  \in B_\eps(b)\Big)}
\notag \\
& \hphantom{XXXXXXX}
\times \frac{e^{-\frac{1}{4}\sum_{|n-M| \leq N }  |\xi_n|^2}}{(2\pi)^{2N+1}}
\prod_{|n-M|\leq N} d \xi_n,
\label{devi2}
\end{align}

\noi
where $\wt{a}$ and $\wt{b}$ are given by
\begin{equation}
\wt{a} = a - \sum_{|n-M| \leq N} \jb{\wt{n}}^{-2} |\xi_n|^2, \quad
\text{and} \quad
\wt{b} = b - \sum_{|n-M| \leq N} \jb{\wt{n}}^{-2} \wt{n} |\xi_n|^2.
\end{equation}

\noi
By repeating the argument in Subsection 2.1, 
we can show that the right hand side of \eqref{devi2}
is uniformly bounded for small $\eps > 0$.

More precisely, define the density $\wt{f}_N(a, b)$
by 
\[\wt{f}_N(a, b) \, da  db = P\bigg( 
\sum_{|n-M|\geq N} \jb{\wt{n}}^{-2} |g_n|^2\in da, 
\,   \sum_{|n-M|\geq N} \jb{\wt{n}}^{-2} \wt{n} |g_n|^2  \in db\bigg).\]

\noi
Then, as in Subsection 2.1, one can prove
\begin{align}
\frac{P\Big( 
\sum_{|n-M|\geq N+1} \jb{\wt{n}}^{-2} |g_n|^2\in A_\eps(\wt{a}), 
\,   \sum_{|n-M|\geq N+1} \jb{ \wt{n}}^{-2}\wt{n} |g_n|^2  \in B_\eps(\wt{b})\Big)}
{P\Big( \sum_n \jb{\wt{n}}^{-2} |g_n|^2\in A_\eps(a), 
\,   \sum_n \jb{\wt{n}}^{-2} \wt{n} |g_n|^2  \in B_\eps(b)\Big)}
\too \frac{\wt{f}_{N+1}(\wt{a}, \wt{b})}{f_{0}(a, b)},
\label{devi3}
\end{align}

\noi
where the convergence is uniform in $\wt{a}$ and $\wt{b}$.
Moreover, 
by showing $\|\wt{f}_N\|_{L^\infty} <\infty$ as before,
we see that the left hand side of \eqref{devi3} is uniformly bounded
for small $\eps > 0$. 
(Recall that $a$ and $b$ are fixed.)
By \eqref{devi2},  \eqref{devi3}, and Lebesgue dominated convergence theorem, 
we have
\begin{align*}
\lim_{\eps\to0}
\mathbb{E}_{P_\eps}  \Big[e^{\frac{1}{4} \sum_{|n-M| \leq K } |g_n|^2}\Big] 
  & = \int_{\C^{2N+1}} 
\frac{\wt{f}_{N+1}(\wt{a}, \wt{b})}{f_{0}(a, b)} \frac{e^{-\frac{1}{4}\sum_{|n-M|\leq N}  |\xi_n|^2}}
{(2\pi)^{2N+1}}\prod_{|n-M|\leq N} d \xi_n \\
& \leq \frac{\|\wt{f}_{N+1}\|_{L^\infty}}{f_{0}(a, b)}
\int_{\C^{2N+1}} \frac{e^{-\frac{1}{4}\sum_{|n-M|\leq N}  |\xi_n|^2}}{(2\pi)^{2N+1}}\prod_{|n-M|\leq N} d \xi_n \\
& \leq \frac{\|\wt{f}_{N+1}\|_{L^\infty}}{f_{0}(a, b)}
2^{2N+1},
\end{align*}

\noi
where the last inequality follows from change of variables.
Also, by an analogous argument to the proof of Lemma \ref{LEM:Xft}, 
we see that 
$\|\wt{f}_{N+1}\|_{L^\infty} \leq\|(\wt{f}_{N+1})^{\wedge}\|_{L^1}$
is bounded at most by a power of $N$.
Hence, we have
\begin{equation} \label{devi4}
\mathbb{E}_{P_\eps}  \Big[e^{\frac{1}{4} \sum_{|n-M| \leq K } |g_n|^2}\Big] 
\lesssim 2^{3N}
\end{equation}

\noi
for all sufficiently small $\eps>0$.
Therefore, \eqref{devi1} follows from \eqref{devi0} and \eqref{devi4}
as long as $R^2 \geq (24 \ln 2) N$.
\end{proof}

\medskip

In the following, we show the $L^1$-boundedness of  the weight $e^{\frac{1}{p}\int_\T|u|^p}$
with respect to $P_\eps$, 
uniformly for sufficiently small $\eps \geq 0$,
for $p\leq 6$
(with sufficiently small mass when $p = 6$.)
This, in particular, shows that $\mu_\eps$ in \eqref{Gibbs2}
is a well-defined probability measure.

Note that it suffices to prove that 
\begin{align}
\int_0^\infty  & e^{\lambda} \, P_\eps\bigg( \int_\T |u|^p\ge p \ld \bigg) d \lambda \notag \\
& = \int_0^\infty  e^{\lambda} \,P\bigg( \int_\T |u|^p\ge p \ld\, \Big| 
\int_\T |u|^2 \in A_\eps(a), 
\,   i \int_\T u \cj{u}_x  \in B_\eps(b)\bigg) d\ld \leq C_p < \infty
\label{I1}
\end{align}

\noi
for all sufficiently small $\eps>0$.
The estimate \eqref{I1} follows once we prove
\begin{equation}
P_\eps\bigg( \int_\T |u|^p\ge p\ld \bigg)
\leq \begin{cases}
C e^{-c \ld^{1+\dl}} & \text{when } p < 6.\\
Ce^{-(1+\dl) \ld} & \text{when } p = 6.
\end{cases}
\label{I0}
\end{equation}

\noi
for  $\ld> 1$ (with some $\dl > 0$),
uniformly in small $\eps > 0$.

Before proving \eqref{I0}, 
let us introduce some notations.
Given $M_0 \in \mathbb{N}$, 
let $\proj_{>M_0}$ denote the Dirichlet projection onto the frequencies $\{|n| > M_0\}$.
i.e.  $\proj_{>M_0} u = \sum_{|n|>M_0} \ft{u}_n e^{2\pi i n x}.$
$\proj_{\leq M_0}$ is defined in a similar manner. 
Given $j \in \mathbb{N}$, let $M_j = 2^j M_0$.  
We use the notation $|n|\sim M_j$ to denote the set of integers $|n|\in (M_{j-1}, M_j]$,
and denote by $\proj_{M_j}$ the Dirichlet projection onto the dyadic block $(M_{j-1}, M_j]$,
i.e.  
$\proj_{M_j} u = \sum_{|n| \sim M_j } \ft{u}_n e^{2\pi i n x}.$

\medskip

Without loss of generality, assume $\eps \leq a$.
Then, we have 
$\int |u|^2 \leq 2a =: K $.
By Sobolev inequality (or equivalently, by Hausdorff-Young inequality followed
by H\"older inequality on the Fourier side in this particular case,) 
\begin{equation} \label{SOB1}
 \|\proj_{\le M_0} u \|_{L^p(\mathbb T)}
\le cM_0^{\frac12 - \frac1{p}} \|\proj_{\le M_0} u \|_{L^2(\mathbb T)}.
\end{equation}

\noi
Hence,  we have 
\begin{equation} \label{I2}
 \int_\T |\proj_{\le M_0} u|^p \le  \tfrac{p}{2} \ld \quad \text{on }\int_{\T} u^2 \leq K,
 \end{equation}

\noi
by choosing 
\begin{equation} \label{I22}
M_0= c_0\lambda^{\frac2{p-2}} K^{-\frac{p}{p-2} }
\sim c_0\lambda^{\frac2{p-2}} a^{-\frac{p}{p-2} }
\end{equation}
for some $c_0>0$.
Let $\sigma_j= C2^{-\dl j} $, $j=1,2,\ldots$ 
for some small $\dl>0$ where $C=C(\dl)$ is chosen such that $\sum_{j=1}^\infty \sigma_j=1$.  
Then, we have
\begin{equation}\label{I3}
   P_\eps \bigg(  \int_\T  |\proj_{ >M_0 } u |^p  > \tfrac{p}{2}\ld\bigg) 
 \le   \sum_{j=0}^\infty 
 P_\eps \Big(\| \proj_{ M_j } u \|_{L^p(\mathbb T)} >\sigma_j \big(\tfrac{p}{2}\ld\big)^{\frac1p}\Big).
\end{equation}

\noi
By Sobolev inequality as in \eqref{SOB1}, we have
\begin{equation}
\| \proj_{M_j} u \|_{L^p(\mathbb T)} \le cM_j^{\frac12 - \frac1p}  \| \proj_{M_j} u \|_{L^2(\mathbb T)}. 
\label{I4}
\end{equation}

\noi
From \eqref{G2}, we have 
\begin{equation}
\label{I5}
\| \proj_{M_j} u \|^2_{L^2(\mathbb T)} =\sum_{|n|\sim M_j} |\hat{u}_n|^2
=\sum_{|n|\sim M_j}  \big(1+ (2\pi n)^2\big)^{-1} |g_n|^2 .
\end{equation}  

\noi
From  \eqref{I4} and \eqref{I5}, the right hand side of \eqref{I3} is bounded by 
\begin{equation}\label{I6}
\sum_{j=0}^\infty P_\eps \bigg(\sum_{|n|\sim M_j} |g_n|^2  \ge R_j^2 \bigg),\quad 
\text{where } R_j := c' \sigma_j \lambda^{\frac1p}M_j^{\frac1p - \frac12} 
(1+ M^2_j )^{1/2}.
\end{equation}

\noi
Note that $R_j \gtrsim M_j^{\frac{1}{2}+\frac{1}{p}}
\gg M_j^\frac{1}{2}$. By applying Lemma \ref{LEM:devi} to \eqref{I6}, we obtain
\begin{align}
P_\eps \bigg(  \int_\T  |\proj_{ >M_0 } u |^p  > \tfrac{p}{2} \ld \bigg) 
& \lesssim \sum_{j = 0}^\infty e^{-\frac{1}{8}R_j^2}
\lesssim \sum_{j = 0}^\infty e^{-c'' \s_j^2 \ld^\frac{2}{p}
M_j^{\frac{p+ 2}{p}}} \notag \\
& \lesssim \sum_{j = 0}^\infty e^{-\wt{c} (2^j)^{\frac{p+2}{p} -2\dl} 
\ld^{\frac{2}{p}} M_0^{\frac{p+2}{p}}}
 \lesssim  e^{-c\ld^{\frac{2}{p}} M_0^{\frac{p+2}{p}}} 
\label{I8}
\end{align}

\noi
Hence, from \eqref{I8} and \eqref{I22}, we have
\begin{equation} \label{I7}
P_\eps \bigg(  \int_\T  | u |^p  >p \ld\bigg) 
\leq C \exp\big\{ -c  \, \ld^{1+\frac{6-p}{p-2}} a^{-\frac{p+2}{p-2}}\big\}
\end{equation}

\noi
and \eqref{I0} follows. 
Note that when $p = 6$, we need to take $a$ sufficiently small
such that the coefficient of $\lambda$ in \eqref{I7} is less than $-1$.

\subsection{Weak convergence}
Finally, we prove weak convergence of $\mu_\eps$ defined in \eqref{Gibbs2} to $\mu_0$.
Let $f$ be a bounded continuous function on $H^{\frac{1}{2}-\g}(\T)$ for some small $\g > 0$.

\medskip
We first consider the defocusing case.
If a sequence of functions $u_n$ converges to $u$ in $H^{\frac{1}{2}-\g}(\T)$ with $\g < p^{-1}$, 
then we have $u_n \to u$ in $L^p(\T)$
by Sobolev inequality.
Thus, 
$e^{-\int_\T |u|^p}$ is bounded and continuous on $H^{\frac{1}{2}-\g}(\T)$.
Then, by weak convergence of $P_\eps $ to $P_0$,
we have
\[Z_\eps = \int e^{-\frac{1}{p}\int_\T |u|^p} dP_\eps
\too \int e^{-\frac{1}{p}\int_\T |u|^p} dP_0 = Z_0 \quad \text{as } \eps \to 0.\]

\noi
Since $f(u)  e^{-\int_\T |u|^p}$ is also bounded and continuous on $H^{\frac{1}{2}-\g}(\T)$,
we have
\begin{align*}
\int f d\mu_\eps
=  Z_\eps^{-1} \int f (u) e^{-\frac{1}{p}\int_\T |u|^p} dP_\eps
\too  Z_0^{-1} \int f (u) e^{-\frac{1}{p}\int_\T |u|^p} dP_0
= \int f d\mu_0 \quad \text{as }  \eps \to 0.
\end{align*}

\noi
This shows that $\mu_\eps$ converges weakly to $\mu_0$
in the defocusing case.

\medskip

Next, we consider the focusing case.
First, we prove 
\begin{equation} \label{Z1}
Z_\eps = \int e^{\frac{1}{p}\int_\T |u|^p} dP_\eps
\too \int e^{\frac{1}{p}\int_\T |u|^p} dP_0 = Z_0 \quad \text{as } \eps \to 0.
\end{equation}

\noi
Let $g(u) = e^{\frac{1}{p}\int_\T |u|^p}$.
By Chebyshev's inequality with the uniform integrability \eqref{I0}, we have
\begin{equation}
\int_{g >B} g(u) dP_\eps \leq C B^{-\dl}
\end{equation}

\noi
for all small $\eps \geq 0$.
Then, \eqref{Z1} follows once we note that
\begin{align*}
|Z_\eps - Z_0| \leq 
\bigg|\int_{g >B} g(u) dP_\eps\bigg|
+ \bigg|\int_{g \leq B} g(u) (dP_\eps - dP_0 ) \bigg|
+ \bigg|\int_{g >B} g(u) dP_0\bigg|,
\end{align*}

\noi
where the second term goes to 0 by the weak convergence of $P_\eps$ to $P_0$.

Let $f$ be a bounded continuous function $f$ on $H^{\frac{1}{2}-\g}(\T)$.
Then,
by writing
\begin{align*}
\int f d\mu_\eps - \int f d\mu_0
& =  Z_\eps^{-1} \int f (u) g(u) dP_\eps
-  Z_0^{-1} \int f (u) g(u) dP_0\\
& =  Z_0^{-1} \bigg( \int f (u) g(u) dP_\eps
-  \int f (u) g(u) dP_0\bigg)\\
& \hphantom{X}
+ ( Z_\eps^{-1} -Z_0^{-1})\int f (u) g(u) dP_\eps,
\end{align*}

\noi
it follows from \eqref{Z1} that the second term on the right hand side goes to zero. 
The first term goes to zero
by the uniform integrability \eqref{I0} with Chebyshev's inequality as before.
Hence, $\mu_\eps$ converges weakly to $\mu_0$.
This completes the proof of Theorem \ref{thm1}.

\section{Proof of Theorem \ref{thm2}: Invariance of the conditioned Gibbs measures}

In this section, we show that the conditioned Gibbs measure $\mu_0$
is invariant under the flow of NLS \eqref{NLS1}.
In fact, one can directly establish the invariance of the conditioned Gibbs measure $\mu_0$
by following the argument developed by Bourgain \cite{B2, B3}.
This argument is based on 
approximating the PDE flow by finite dimensional Hamiltonian systems
with invariant finite dimensional Gibbs measures.
For such an argument, one needs 
the following large deviation estimate (with $\eps = 0$.)
\begin{lemma} \label{LEM:LD}
Let $s < \frac{1}{2}$. Then, we have
\begin{equation} \label{LD}
P_\eps \Big( \|u\|_{H^s} > \Ld \Big)  \leq C_s e^{-c\Ld^2},
\end{equation}

\noi
uniformly in small $\eps \geq 0$.
\end{lemma}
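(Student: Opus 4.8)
The plan is to prove Lemma \ref{LEM:LD} by decomposing the $H^s$ norm into dyadic blocks and applying the uniform large deviation estimate of Lemma \ref{LEM:devi} block by block, in exactly the spirit of the pigeonhole argument already used in Subsection \ref{SUBSEC:2.2}. Write $s = \frac12 - \theta$ for some $\theta > 0$. From the Fourier--Wiener representation \eqref{G2}, we have
\[
\|u\|_{H^s}^2 = \sum_{n \in \Z} \jb{\wt{n}}^{2s}\jb{\wt{n}}^{-2}|g_n|^2 = \sum_{n\in\Z} \jb{\wt{n}}^{2s-2}|g_n|^2,
\]
so the coefficients decay like $|n|^{-1-2\theta}$. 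For $j \geq 0$, let $M_j = 2^j$ and let $\proj_{M_j}$ be the Dirichlet projection onto the dyadic block $|n|\sim M_j$, as in Subsection \ref{SUBSEC:2.2}. Choosing $\s_j = C 2^{-\dl j}$ with $\sum_j \s_j = 1$ for a small $\dl \in (0,\theta)$, a union bound gives
\[
P_\eps\Big(\|u\|_{H^s} > \Ld\Big) \leq \sum_{j=0}^\infty P_\eps\Big(\|\proj_{M_j}u\|_{H^s} > \s_j \Ld\Big)
\leq \sum_{j=0}^\infty P_\eps\bigg(\sum_{|n|\sim M_j}|g_n|^2 \geq R_j^2\bigg),
\]
where $R_j \sim \s_j \Ld \, M_j^{1/2 + \theta}$ after absorbing the $\jb{\wt{n}}^{s-1}\cdot\jb{\wt{n}}$ weights over the block (they are all comparable to $M_j^{\theta}$ there, up to constants).

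**Next I would** verify the hypotheses of Lemma \ref{LEM:devi} for each term. The block $|n|\sim M_j$ has the form $|n - M| \leq N$ with $M \sim N \sim M_j$ (more precisely one covers $(M_{j-1}, M_j] \cup [-M_j, -M_{j-1})$ by a bounded number of such intervals, which costs only a constant factor in the union bound). The requirement $R_j \geq 5 N^{1/2}$ holds for $\Ld$ large because $R_j \sim \s_j \Ld\, M_j^{1/2+\theta} \gtrsim \Ld\, 2^{(\theta - \dl)j} M_j^{1/2}$ and $\theta - \dl > 0$; for the finitely many small $j$ where this might fail for moderate $\Ld$, one can either enlarge the constant in $C_s$ or note the probability there is trivially bounded. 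Then Lemma \ref{LEM:devi} yields
\[
P_\eps\bigg(\sum_{|n|\sim M_j}|g_n|^2 \geq R_j^2\bigg) \leq C e^{-\frac18 R_j^2} \leq C e^{-c\, \s_j^2 \Ld^2 M_j^{1+2\theta}} \leq C e^{-c\, 2^{(1+2\theta - 2\dl)j}\Ld^2}.
\]

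**Then I would** sum the geometric-type series: since $1 + 2\theta - 2\dl > 0$, the exponent $2^{(1+2\theta-2\dl)j}\Ld^2$ is increasing in $j$ and bounded below by $\Ld^2$, so
\[
P_\eps\Big(\|u\|_{H^s} > \Ld\Big) \lesssim \sum_{j=0}^\infty e^{-c\, 2^{(1+2\theta-2\dl)j}\Ld^2} \lesssim e^{-c\Ld^2},
\]
uniformly in sufficiently small $\eps \geq 0$, which is \eqref{LD}. **The main obstacle** I anticipate is purely bookkeeping rather than conceptual: matching the dyadic annulus $\{|n|\sim M_j\}$ to the shifted-ball form $\{|n-M|\leq N\}$ with $M\sim N$ required by Lemma \ref{LEM:devi} (handled by splitting into $O(1)$ such balls and treating positive and negative frequencies separately), and checking the lower bound $R_j \geq 5N^{1/2}$ uniformly — this forces the choice $\dl < \theta$ and the restriction to $\Ld$ large, with the remaining range of $\Ld$ absorbed into the constant $C_s$. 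The uniformity in $\eps$ is inherited directly from Lemma \ref{LEM:devi} and requires no additional work.
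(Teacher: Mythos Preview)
Your proposal is correct and follows essentially the same route as the paper: dyadic decomposition of the $H^s$ norm, a union bound with weights $\sigma_j$, and Lemma~\ref{LEM:devi} applied block by block. The paper's proof is the one-line remark that \eqref{LD} follows by repeating the computation in Subsection~\ref{SUBSEC:2.2} with $p$ chosen so that $s=\tfrac12-\tfrac1p$ and $\Lambda=\lambda^{1/p}$; your write-up simply unpacks that computation, with the minor cosmetic difference that you start the dyadic scale at $M_0=1$ and absorb small $\Lambda$ into $C_s$ rather than introducing a $\Lambda$-dependent low-frequency cutoff (which is not needed here since $\|u\|_{H^s}^2$ is already quadratic in the $g_n$).
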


\begin{proof}
This basically follows from the proof of \eqref{I7} in Subsection \ref{SUBSEC:2.2}.
Given $s < \frac{1}{2}$, choose $p > 2$ such that $s = \frac{1}{2}-\frac{1}{p}$.
Then, we have
\begin{equation} \label{LD1}
 \|\proj_{\le M_0} u \|_{H^s(\mathbb T)}
\le cM_0^{\frac12 - \frac1{p}} \|\proj_{\le M_0} u \|_{L^2(\mathbb T)}.
\end{equation}

\noi
(Compare this with \eqref{SOB1}.)
By repeating the computation in Subsection \ref{SUBSEC:2.2}
(with $\Ld = \ld^\frac{1}{p}$), we obtain
\begin{equation} \label{LD2}
P_\eps \Big( \|u\|_{H^s} > \Ld \Big) \leq C_s \exp\big\{ -c  \, \Ld^{p(1+\frac{6-p}{p-2})} a^{-\frac{p+2}{p-2}}\big\}.
\end{equation}

\noi
Then, \eqref{LD} follows since $p(1+\frac{6-p}{p-2}) >2$ for $p > 2$.
\end{proof}

\noi
Bourgain's argument \cite{B2, B3}
requires a combination of PDE and probabilistic techniques.
In the following, however, we simply show how the invariance of the conditioned Gibbs measure $\mu_0$
follows, as a corollary, from a priori invariance of Gibbs measures $\mu_\eps$, $\eps > 0$.

\medskip 
\noi
$\bullet$ {\bf Case 1:} $p\leq 6$.
\quad 
In this case, the flow of \eqref{NLS1}
is globally defined in $H^{\frac{1}{2}-\dl}(\T)$ for small $\dl = \dl(p) > 0$,
thanks to \cite{B1, B5}.
Let $\mathcal{S}_t$ be the flow map of \eqref{NLS1}: $u_0 \mapsto u(t) = \mathcal{S}_{t} u_0$.
Then, $\mathcal{S}_t$ is well-defined and continuous
on $H^{\frac{1}{2}-\dl}(\T)$

Given a bounded continuous function $\phi$ on $H^{\frac{1}{2}-\dl}(\T)$, 
$\phi\circ \mathcal S_t$ is bounded and continuous on $H^{\frac{1}{2}-\dl}(\T)$.
By weak convergence of $\mu_\eps$ to $\mu_0$
and invariance of $\mu_\eps$ under the flow of \eqref{NLS1}, we have
\begin{equation*} 
\int \phi \, d\mu_0 = 
\lim_{\eps\to0}\int \phi \, d\mu_\eps = \lim_{\eps\to0} \int \phi\circ \mathcal S_t \, d\mu_\eps 
 = \int \phi\circ \mathcal S_t \, d\mu_0.
\end{equation*}

\noi
This proves invariance of $\mu_0$ for $p \geq 6$.

\medskip
\noi
$\bullet$ {\bf Case 2:} $p> 6$.
(This is relevant only in the defocusing case.)
\quad 

In this case, 
there is no a priori global-in-time flow of \eqref{NLS1}
on $H^{\frac{1}{2}-\dl}(\T)$.
However, by Bourgain's argument \cite{B2, B3}, 
$\mu_\eps$ is invariant under the flow of NLS \eqref{NLS1}
for each $\eps > 0$,
and we show invariance of $\mu_0$ as a corollary to the invariance of $\mu_\eps$, $\eps > 0$.

Let $K$ be a compact set in $H^{s}(\T)$ with $s = \frac{1}{2}-$.
Then, there exists $\Ld = \Ld(K) >0$ such that  $\|u\|_{H^s} \leq \Ld$ for $u \in K$.
By the (deterministic) local well-posedness \cite{B2}, 
there exists $t_0>0$ such that NLS \eqref{NLS1} is well-posed on $[0, t_0]$
for initial data $u_0$ with $\|u_0\|_{H^s} \leq \Ld + 1$.
Moreover, for each small $\theta > 0$, 
 there exists $\dl > 0$ such that 
\begin{equation} \label{ZZZ}
\mathcal{S}_{t_0} (K+B_\dl)\subset \mathcal{S}_{t_0} K+B_{\theta}.
\end{equation}

\noi
Then, by weak convergence of $\mu_\eps$ to $\mu_0$, we have
\begin{align*}
\mu_0(K) & \leq \mu_0(K+B_\dl)
\leq \liminf_{\eps \to 0} \mu_\eps(K+B_\dl)\\
\intertext{By invariance of $\mu_\eps$
and \eqref{ZZZ}, }
& = \liminf_{\eps \to 0} \mu_\eps\big(\mathcal{S}_{t_0} (K+B_\dl)\big)
\leq \liminf_{\eps \to 0} \mu_\eps(\mathcal{S}_{t_0} K+B_{\theta}\,)\\
& \leq \limsup_{\eps \to 0} \mu_\eps(\mathcal{S}_{t_0} K+B_{\theta}\,)
 \leq \limsup_{\eps \to 0} \mu_\eps(\mathcal{S}_{t_0} K+\cj{B_{\theta}}\,)\\
& \leq \mu_0(\mathcal{S}_{t_0} K+\cj{B_{\theta}}\,),
\end{align*}

\noi
where the last inequality follows once again from 
the weak convergence of $\mu_\eps$ to $\mu_0$.
By letting $\theta \to 0$, we have 
$\mu_0(K) \leq \mu_0(\mathcal{S}_{t_0} K)$.
Given arbitrary $t>0$, we can iterate the above argument and obtain 
$\mu_0(K) \leq \mu_0(\mathcal{S}_{t} K)$.
By the time-reversibility of the NLS flow, we obtain 
\[\mu_0(K) = \mu_0(\mathcal{S}_t K).\]

\noi
This proves invariance of $\mu_0$ for $p > 6$.

\end{document}